\documentclass[a4, 12pt]{amsart}
\usepackage{amssymb}
\usepackage{amstext}
\usepackage{amsmath}
\usepackage{amscd}
\usepackage{latexsym}
\usepackage{amsfonts}
\usepackage[all]{xy}	%%the package to write commutative diagram
%%%%%%%%%%%%%%%%%%%%%%%%%%%%%%%%%%%%%%%%%%%%%%%%%%%%%%%%%%%%%%%%%%%%%

\theoremstyle{plain}
\newtheorem{Theorem}{Theorem}[section]

\newtheorem{theorem}[Theorem]{Theorem}
\newtheorem{prop}[Theorem]{Proposition}

\newtheorem{lem}[Theorem]{Lemma}

\newtheorem{cor}[Theorem]{Corollary}

%\setcounter{claim}{0}

%%%%%%%%% No Theorem Number %%%%%%%%%%%%%%
\newtheorem*{thm*}{Theorem}
\newtheorem*{cor*}{Corollary}

\newtheorem*{claim*}{Claim}
%%%%%%%%%%%%%%%%%%%%%%%%%%%%%%%%%%

\theoremstyle{definition}

\newtheorem{dfn}[Theorem]{Definition}

\newtheorem{ex}[Theorem]{Example}

\theoremstyle{remark}

\newcommand{\Proof}{\begin{proof}}
\newcommand{\Qed}{\end{proof}}

\numberwithin{equation}{Theorem}
%%%%%%%%%%%%%%%%%%%%%%%%%%%%%%%%%%%%%%%%%%%%%%%%%%%%%%%%%%%%%%%%%%%

\def\x{\underline x}

\newcommand{\calD}{\mathcal{D}}

\newcommand{\calF}{\mathcal{F}}

\newcommand{\calM}{\mathcal{M}}

\newcommand{\fkm}{\mathfrak{m}}

\newcommand{\fkp}{\mathfrak{p}}
\newcommand{\fkq}{\mathfrak{q}}

\def\depth{\operatorname{depth}}

\def\Ann{\operatorname{Ann}}
\def\Ass{\operatorname{Ass}}
\def\Assh{\operatorname{Assh}}

\def\height{\operatorname{ht}}

\def\Spec{\operatorname{Spec}}

\def\mult{\operatorname{mult}}

\def\ardeg{\operatorname{arith-deg}}

%\tolerance=9999

\setlength{\oddsidemargin}{1.1mm}
\setlength{\evensidemargin}{1.1mm}
\setlength{\topmargin}{-0.9cm}
\setlength{\headheight}{0.8cm}
\setlength{\headsep}{0.5cm}
\setlength{\textwidth}{15.6cm}
\setlength{\textheight}{23cm}

\pagestyle{plain}

\begin{document}

\setlength{\baselineskip}{17pt}
%%%%%%%%%%%%%%%%%%%%%%%%%%%%%%%%%%%%%%%%%%%%%%%%%%%%%%%%%%%%%
%%%%%%%%%%%%%%%%%%%%%%%%%%%%%%%%%%%%%%%%%%%%%%%%%%%%%%%%%%%%%
%%%%%%%%%%%%%%%%%%%%%%%%%%%%%%%%%%%%%%%%%%%%%%%%%%%
\title{Hilbert coefficients and sequentially Cohen-Macaulay modules }
\author{Nguyen Tu Cuong}
\address{Institute of Mathematics 18 Hoang Quoc Viet Road 10307 Hanoi Vietnam}
\email{ntcuong@math.ac.vn}

\author{Shiro Goto}
\address{Department of Mathematics, School of Science and Technology, Meiji University, 1-1-1 Higashimita, Tama-ku, Kawasaki 214-8571, Japan}
\email{goto@math.meiji.ac.jp}

\author{Hoang Le Truong}
\address{Institute of Mathematics 18 Hoang Quoc Viet Road 10307 Hanoi Vietnam}
\email{hltruong@math.ac.vn}
\thanks{ The first author is supported by NAFOSTED of Vietnam under grant number 101.01-2011.49. The third author is  partially supported by NAFOSTED of Vietnam and by RONPAKU fellowship.
\endgraf
{\it Key words and phrases:}
Arithmetic degree,  dimension filtration, good parameter ideal, Hilbert coefficient, multiplicity,  sequentially Cohen-Macaulay.
\endgraf
{\it 2000 Mathematics Subject Classification:}
13H10, 13A30, 13B22, 13H15.}

\maketitle

\begin{abstract}
The purpose of this paper is to present a characterization of  sequentially
Cohen-Macaulay modules in terms of its Hilbert coefficients with respect to distinguished parameter ideals. The formulas involve arithmetic degrees. Among corollaries of the
main result we obtain a short proof of Vasconcelos Vanishing Conjecture for modules and an upper bound for the first Hilbert coefficient.
\end{abstract}

\section{Introduction}
Let $(R, \frak m)$ be a Noetherian local ring with the maximal ideal $\frak m$ and $I$ an $\frak m$-primary ideal of $R$. Let $M$ be a finitely generated $R$-module of dimension $d$.  It is well known that there exists a polynomial $p_I(n)$ of degree $d$ with rational coefficients, called the Hilbert-Samuel polynomial, such that  $\ell(M/I^{n+1}M)=p_I(n)$  for all large enough $n$. Then, there are integers $e_i(I,M)$ such that
$$p_I(n)=\sum\limits_{i=0}^d(-1)^ie_i(I,M)\binom{n+d-i}{d-i}.$$
These integers  $e_i(I,M)$   are called the Hilbert coefficients of $M$ with respect to $I$. In particular, the leading coefficient  $e_0(I, M)$ is  called the multiplicity of $M$ with respect to $I$ and $e_1(I,M)$ is called by Vasconcelos the Chern number of $I$  with respect to $M$. Although the theory of multiplicity has been rapidly developing for the last 50 years and proved to be a very important tool in algebraic geometry and commutative algebra, not so much is known about the Hilbert coefficients  $e_i(I,M)$ 
with $i>0$. At the conference in Yokohama 2008, W. V. Vasconcelos \cite{V2} posed the following conjecture:
\vskip0.2 cm
\noindent
{\bf The Vanishing Conjecture:}
Assume that $R$ is an unmixed, that is $\dim (\hat R/ P) = \dim R$ for all $P\in \Ass \hat R$, where $\hat R$ is the $\frak m$-adic completion of $R$. Then $R$ is  a Cohen-Macaulay local ring if and only if  $e_1(\frak q,R) = 0$ for some parameter ideal $\frak q$ of $R$.
\vskip0.2cm
\noindent
Recently, this conjecture has been settled by L. Ghezzi,  J.-Y. Hong. K. Ozeki, T. T. Phuong, W. V. Vasconcelos and the second author in \cite{GGHOPV}. Moreover, the second author showed in \cite{G} how one can use Hilbert coefficients of parameter ideals to study many classes of non-unmixed modules such as Buchsbaum modules, generalized Cohen-Macaulay modules, Vasconcelos modules.... The aim of our paper is to continue this research direction. Concretely, we will give  characterizations of a sequentially Cohen-Macaulay module in term of its  Hilbert coefficients with respect to  certain parameter ideals (Theorem \ref{coefficient}). Recall that  
sequentially Cohen-Macaulay module was introduced first by Stanley \cite{St} for graded case. In the local case, a module $M$ is said to be a sequentially Cohen-Macaulay module if  there exists a filtrations of submodules $M=D_0\supset D_1\supset\ldots\supset D_s$  such that  $\dim D_{i} >\dim D_{i+1}$ and $D_i/D_{i+1}$ are  Cohen-Macaulay for  all $i=0,1,\ldots,s-1$ (see \cite{Sc}, \cite{CN}). Then $M$ is a Cohen-Macaulay module if and only if $M$ is an unmixed sequentially Cohen-Macaulay module.  Therefore, as an immediate consequence of our main result, we get  again the answer to Vasconcelos' Conjecture for modules.  Furthermore, Theorem \ref{coefficient} let us to get several interesting properties of  the Chern numbers of parameter ideals on non-unmixed modules. Especially, we can prove a slight stronger than Theorem 3.5 of  M. Mandal, B. Singh and J. K. Verma in \cite{MSV} about the non-negativity of the Chern number  of   any parameter ideal with respect to arbitrary finitely generated module (Corollary \ref{negative}).\\
This paper is divided into 4 sections. In the next section we recall the notions of dimension filtration, good parameter ideals and distinguished parameter ideals following  \cite{Sc}, \cite{CN},  \cite {CC1},  \cite {CC2},  and prove some preliminary results on the dimension filtration. We discuss in Section 3 the relationship between  Hilbert coefficients and arithmetic degrees (see \cite{BM}, \cite{V}) of an $\frak m$-primary ideal. The last section  is devoted to prove the main result and its consequences.   

\vspace{3mm}
%%%%%%%%%%%%%%%%%%%%%%%%%%%%%%%%%%%%%%%%%%%%%%%%%%%
\section{The dimension filtration} 
Throughout this paper,  $(R,\fkm)$ is a Noetherian local ring and $M$ is a finitely generated $R$-module of dimension $d$. 
\begin{dfn} {(\cite{CC1},  \cite {CC2},  \cite{CN})}
A filtration $\calD:M=D_0\supset D_1\supset\ldots\supset D_s= H^0_\fkm(M)$ of submodules of $M$ is  said to be a {\it dimension filtration},  if $D_i$ is the largest submodule of $D_{i-1}$ with $\dim D_i<\dim D_{i-1}$ for all $i=1,\ldots,s$. 
A system of parameters $\x=x_1,\ldots,x_d$ of $M$ is called a {\it good system of parameters} of $M$, if  $N\cap (x_{\dim N+1},\ldots,x_d)M=0$ for all submodules $N$ of $M$ with $\dim N<d$. A parameter ideal $\frak q$ of $M$ is called a {\it good parameter ideal}, if there exists a good system of parameters  $\x=x_1,\ldots,x_d$ such that $\frak q = (\x)$.
 \end{dfn} 

Now let us briefly give some facts on the dimension filtration and good systems of parameters (see \cite{CC1},  \cite {CC2}, \cite{CN}).  Let $\Bbb N$ be the set of all positive integers. We denote by $$\Lambda(M)=\{r\in \Bbb N\mid \text{ there is a submodule } N \text{ of } M \text{ such that } \dim N=r\}.$$  Because of the Noetherian property of $M$,  the dimension filtration of $M$ and $\Lambda(M)$ exist uniquely. Therefore, throughout this paper we always denote  by 
$$\calD: M=D_0\supset D_1\supset\ldots\supset D_s=H^0_\fkm(M)$$
the dimension filtration of $M$  with $\dim D_i =d_i$, and $\calD_i=D_i/D_{i+1}$ for all $i=0,\ldots,s-1$. Then we can check that $$\Lambda(M)=\{d_i=\dim D_i\mid i= 0,\ldots,s-1\}.$$ 
In this case, we also say that the dimension filtration $\calD$ of $M$ has the length $s$. Moreover, let $\bigcap_{\fkp\in \Ass M}N(\fkp)=0$ be a reduced primary
decomposition of submodule $0$ of $M$, then $D_i=\bigcap_{\dim(R/\fkp)\geqslant d_{i-1}}N(\fkp)$. Especially, if we set
$\Assh(M)=\{\fkp\in \Ass(M)\mid\dim R/\fkp=\dim M\}$, then   the submodule $$D_1=\bigcap\limits_{\fkp\in \Assh(M)}N(\fkp)$$
 is called  the unmixed component of  $M$ and denoted by $U_M(0)$. It  should be mentioned that $U_M(0)$ is just the largest submodule of $M$ having the dimension strictly smaller than $d$. Moreover,  
  $H^0_\fkm(M)\subseteq U_M(0)$ and  $H^0_\fkm(M)=U_M(0)$ if $\Ass(M)\subseteq \Assh(M) \cup \{\fkm\}$. 
Put $N_i=\bigcap_{\dim(R/\fkp)\leqslant
d_i}N(\fkp)$. Therefore $D_i\cap N_i=0$ and $\dim (M/N_i)=d_i$. By the
Prime Avoidance Theorem,  there exists a system of parameters $\x=(x_1,\ldots,x_d)$ such that $x_{d_i+1},\ldots,x_d
\in\text {Ann }(M/N_i)$. It follows that $D_i\cap (x_{d_i+1},\ldots,x_d)M\subseteq N_i\cap D_i=0$ 
for all $i=1,\ldots,s$. Thus by the definition of the dimension filtration, $\x=x_1,\ldots,x_d$ is a good system of parameters of $M$, and therefore the set of good systems of parameters of $M$ is always non-empty. Let $\x=x_1,\ldots,x_d$ be a good system of parameters of $M$. It easy to see that   $x_1,\ldots,x_{d_i}$ is a good system of parameters of $D_i$, so is $x_1^{n_1},\ldots,x_d^{n_d}$ for any $d$-tuple of positive integers $n_1,\ldots , n_d$.  With notations as above we have

\begin{lem}\label{equi1}
Let $\calF:M=M_0\supset M_1\supset\ldots\supset M_t$ be a filtration of submodules of $M$. Then the following statements are equivalent:
\begin{enumerate}
\item[$(1)$] $\dim M_t\le 0$, $\dim M_{i+1}<\dim M_{i}$, and $\Ass(M_i/M_{i+1})\subseteq \Assh(M_i/M_{i+1})\cup \{\fkm\}$ for all $i=0, 1,\ldots,t-1$.
\item[$(2)$] $s=t$ and $D_i/M_i$ has a finite length for each $i=1,\ldots,s$.
\end{enumerate}
When this is the case, we have $\dim M_i=d_i$.
\end{lem}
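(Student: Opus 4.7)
The plan is to exploit the defining maximality of the dimension filtration ($D_i$ is the largest submodule of $D_{i-1}$ of dimension strictly less than $d_{i-1}$) together with the following basic observation. If $L$ is a finitely generated $R$-module and $N\subsetneq L$ satisfies $\dim N<\dim L$ and $\Ass(L/N)\subseteq \Assh(L/N)\cup\{\fkm\}$, then $N\subseteq U_L(0)$ and $U_L(0)/N$ has finite length. Indeed, $N\subseteq U_L(0)$ by the maximality of the unmixed component, while the embedding $U_L(0)/N\hookrightarrow L/N$ gives $\Ass(U_L(0)/N)\subseteq \Assh(L/N)\cup\{\fkm\}$; since $\dim U_L(0)/N<\dim L=\dim L/N$, no associated prime of $U_L(0)/N$ can lie in $\Assh(L/N)$, so $\Ass(U_L(0)/N)\subseteq\{\fkm\}$.

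For $(1)\Rightarrow(2)$ I would induct on $i$ to prove that $M_i\subseteq D_i$ with $D_i/M_i$ of finite length (and hence $\dim M_i=d_i$). The base case $i=0$ is immediate. For the inductive step, the observation above applied to $N=M_i\subsetneq L=M_{i-1}$ yields $M_i\subseteq U_{M_{i-1}}(0)$ with $U_{M_{i-1}}(0)/M_i$ of finite length. The induction hypothesis gives $M_{i-1}\subseteq D_{i-1}$ with finite-length quotient, so $\dim M_{i-1}=d_{i-1}$; then applying maximality of $D_i$ in $D_{i-1}$ to $U_{M_{i-1}}(0)$ and maximality of $U_{M_{i-1}}(0)$ in $M_{i-1}$ to $D_i\cap M_{i-1}$ forces the identification $U_{M_{i-1}}(0)=D_i\cap M_{i-1}$. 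Hence $D_i/U_{M_{i-1}}(0)\cong (D_i+M_{i-1})/M_{i-1}\hookrightarrow D_{i-1}/M_{i-1}$ has finite length, and composing with the finite-length quotient $U_{M_{i-1}}(0)/M_i$ shows that $D_i/M_i$ is of finite length. The equality $s=t$ then follows by comparing the two strictly decreasing chains $d=d_0>d_1>\cdots>d_{s-1}$ (all positive integers in $\Lambda(M)$) and $\dim M_0>\dim M_1>\cdots>\dim M_t$ with $\dim M_t\le 0$, using $\dim M_i=d_i$ from the induction.

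For $(2)\Rightarrow(1)$, the dimension requirements are immediate: $D_i/M_i$ of finite length forces $\dim M_i=d_i$, so $\dim M_{i+1}<\dim M_i$ for each $i<s$, and $\dim M_t=\dim M_s\le\dim H^0_\fkm(M)\le 0$. For the associated-prime condition I would use the exact sequence
\[
0\to (M_i\cap D_{i+1})/M_{i+1}\to M_i/M_{i+1}\to (M_i+D_{i+1})/D_{i+1}\to 0.
\]
The left term is a submodule of $D_{i+1}/M_{i+1}$, hence of finite length, so its associated primes are contained in $\{\fkm\}$; the right term embeds in $D_i/D_{i+1}$, whose associated primes are all of dimension $d_i$ by the maximality of $D_{i+1}$ in $D_i$, and therefore lie in $\Assh(M_i/M_{i+1})$. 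Combining the two halves via the exact sequence yields $\Ass(M_i/M_{i+1})\subseteq\Assh(M_i/M_{i+1})\cup\{\fkm\}$.

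The main obstacle I expect is the clean identification $U_{M_{i-1}}(0)=D_i\cap M_{i-1}$ inside the induction, together with a careful bookkeeping at the bottom of the filtration (where $d_s\le 0$ and $H^0_\fkm(M)$ may vanish) needed to pin down $s=t$ exactly.
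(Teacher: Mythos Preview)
Your argument is correct. For $(1)\Rightarrow(2)$ both you and the paper induct on $i$ and exploit the fact that $\Ass(L)\subseteq\Assh(L)\cup\{\fkm\}$ forces $U_L(0)=H^0_\fkm(L)$, but the organisation differs. You apply this principle inside $M_{i-1}$, obtaining finite length for $U_{M_{i-1}}(0)/M_i$, and then make the identification $U_{M_{i-1}}(0)=D_i\cap M_{i-1}$ to splice in the inductive hypothesis via the extension $D_i/(D_i\cap M_{i-1})$ by $(D_i\cap M_{i-1})/M_i$. The paper instead transfers the hypothesis on associated primes from $M_i/M_{i+1}$ to $D_i/M_{i+1}$ (using that $D_i/M_i$ is finite length), and then reads off $D_{i+1}/M_{i+1}=U_{D_i/M_{i+1}}(0)=H^0_\fkm(D_i/M_{i+1})$ in one stroke. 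Your identification $U_{M_{i-1}}(0)=D_i\cap M_{i-1}$ is fine (each side contains the other by the two maximality properties you name), but the paper's route sidesteps it entirely and is a bit shorter. For $(2)\Rightarrow(1)$ the paper simply says ``trivial from the definition of the dimension filtration''; your exact-sequence argument is a correct way to unpack that.
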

\begin{proof}
$(2)\Rightarrow (1)$ is trivial from the definition of the dimension filtration.

$(1)\Rightarrow (2)$.  We show recursively   on $i$ that $D_i/M_i$ has a finite length for all $i\leqslant t$. In the case $i=1$, we have $M_1\subseteq D_1$ and so that $$\Ass(D_1/M_1)\subseteq \Ass(M/M_1)\subseteq \Assh(M/M_1)\cup\{\fkm\}=\Ass(M/D_1)\cup\{\fkm\}.$$ Thus $D_1/M_1=U_{M/M_1}(0)=H^0_\fkm(M/M_1)$. Hence $D_1/M_1$ has a finite length. Assume the result holds for $i$; we will prove it for $i+1$. Since $D_i/M_i$ has a finite length, we have $$\Ass(D_i/M_{i+1})\subseteq \Ass(M_i/M_{i+1})\cup\{\fkm\}=\Assh(M_i/M_{i+1})\cup\{\fkm\}.$$ Thus $\Assh(D_i/M_{i+1})=\Assh(M_i/M_{i+1})$ and so that $\Ass(D_i/M_{i+1})\subset \Assh(D_i/M_{i+1})\cup\{\fkm\}$.  Therefore, as similar in the case $i=1$, $D_{i+1}/M_{i+1}$ has a finite length. Hence $D_i/M_i$ has a finite length for all $i=1,\ldots,t$. The claim  $s=t$ follows from the definition of the dimension filtration and the fact that $\dim M_t \leqslant 0$.
\end{proof}

\begin{dfn} { (see \cite{Sc})}
Let $\calF:M=M_0\supset M_1\supset\ldots\supset M_t$ be a filtration of submodules of $M$. A system of parameters $\x=x_1,\ldots,x_d$ of $M$ is called a {\it distinguished system of parameters} of $M$ with respect to $\calF$, if  $(x_{\dim M_i+1},\ldots,x_d) \subseteq \Ann M_i$ for all positive integers $i$. A parameter ideal $\frak q$ of $M$ is called a {\it distinguished parameter ideal} of $M$ with respect to $\calF$, if there exists a distinguished system of parameters  $\x=x_1,\ldots,x_d$ of $M$ with respect to $\calF$ such that $\frak q = (\x)$. We simply say that $\fkq =(\x)$ is a distinguished  parameter ideal if  $\x$ is a distinguished system of parameters  with respect to the dimension filtration. 
 \end{dfn} 

Let $\calF:M=M_0\supset M_1\supset\ldots\supset M_t$ be a filtration of submodules of $M$. %For each $x\in R$, we denote by $\calF_x:M/xM =(M_0+xM)/xM \supset (M_1+xM)/xM\supset \ldots\supset (M_t+xM)/xM$. 
For each submodule $N$ of $M$, let 
$\calF/N:M/N =(M_0+N)/N \supset (M_1+N)/N\supset\ldots\supset (M_t+N)/N$ denote the filtration of submodules of $M/N$.
When $N=xM$ for some $x\in R$, we abbreviate $\calF/xM$ to $\calF_x$.

\begin{lem}
Let $\calF:M=M_0 \supset M_1 \supset\ldots\supset M_t$ be a filtration of submodules of $M$.  Then
the following statements hold true.
\begin{enumerate}
\item[$(1)$] A good system of parameters of $M$ is also a distinguished system of parameters of $M$ with respect to $\calF$. Thus, there always exists a distinguished system of parameters with respect to $\calF$.
%\item[$(2)$] If $x_1,\ldots,x_d$ is a distinguished system of parameters of $M$ with respect to $\calF$ then $x_2,\ldots,x_d$ is a distinguished system of parameters of $M/x_1M$ with respect to $\calF_{x_1}$
\item[$(2)$] Let $N$ be a submodule of $M$. If $x_1,\ldots,x_d$ is a distinguished system of parameters of $M$ with respect to $\calF$ and $\dim N< \dim M$, then $x_1,\ldots,x_d$ is a distinguished system of parameters of $M/N$ with respect to $\calF/N$.
\end{enumerate}
\end{lem}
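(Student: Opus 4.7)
For part (1), my plan is to reduce the distinguished condition directly to the defining property of a good system of parameters. I take a good system of parameters $\x = x_1,\ldots,x_d$ of $M$ (its existence was established just before the lemma) and verify $(x_{\dim M_i + 1},\ldots,x_d) \subseteq \Ann M_i$ for each $i \geq 1$. If $\dim M_i = d$ the ideal on the left is the zero ideal and the inclusion is vacuous; if $\dim M_i < d$, I apply the defining property of a good s.o.p.\ to the submodule $N := M_i$ to get $M_i \cap (x_{\dim M_i+1},\ldots,x_d)M = 0$, and since $(x_{\dim M_i+1},\ldots,x_d)M_i$ lies in this intersection we conclude $(x_{\dim M_i+1},\ldots,x_d)\subseteq\Ann M_i$. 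Because a good system of parameters of $M$ always exists, so does a distinguished one with respect to $\calF$.

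For part (2), first I verify that $\x$ remains a system of parameters of $M/N$: from $\dim N < d$ we have $\dim(M/N) = d$, and $M/((\x)M+N)$, being a quotient of the finite length module $M/(\x)M$, has finite length. Next I set $M_i^N := (M_i + N)/N \cong M_i/(M_i \cap N)$ and use the canonical surjection $M_i \twoheadrightarrow M_i^N$ to deduce $\Ann M_i \subseteq \Ann M_i^N$ and $\dim M_i^N \leq \dim M_i$. Combining these with the distinguished hypothesis on $\x$, I immediately obtain $(x_{\dim M_i+1},\ldots,x_d) \subseteq \Ann M_i^N$.

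The step I expect to be the main obstacle is the dimension-preservation claim $\dim M_i^N = \dim M_i$; once this is in hand, the ideal $(x_{\dim M_i^N + 1},\ldots,x_d)$ required for the filtration $\calF/N$ coincides with $(x_{\dim M_i + 1},\ldots,x_d)$, already known to lie in $\Ann M_i^N$, and the lemma is proved. To prove the non-drop, I would use the short exact sequence $0 \to M_i \cap N \to M_i \to M_i^N \to 0$, which yields $\dim M_i = \max\bigl(\dim(M_i \cap N),\,\dim M_i^N\bigr)$; a strict drop would force $\dim(M_i \cap N) = \dim M_i$. Since $M_i \cap N \subseteq N$ and $\dim N < d$, this in turn would force $\dim M_i < d$. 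At this point I would exploit the compatibility that a good system of parameters provides (which by part (1) is the origin of distinguished systems)—namely that $M_i \cap (x_{\dim M_i+1},\ldots,x_d)M = 0$ controls how much of $M_i$ a submodule of smaller dimension can absorb—to rule out the pathological case and conclude $\dim M_i^N = \dim M_i$, finishing the proof.
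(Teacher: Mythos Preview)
Your argument for part (1) is correct and is precisely what the paper's ``Straightforward'' is gesturing at.

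For part (2), you correctly isolate the crux: the distinguished condition for $\calF/N$ is governed by $\dim M_i^N$, not $\dim M_i$, so if the dimension drops you would need the extra parameters $x_{\dim M_i^N+1},\ldots,x_{\dim M_i}$ to annihilate $M_i^N$, which is not given. Your proposed resolution, however, has two defects. First, you invoke the good-parameter property, but the hypothesis of part (2) says only that $\x$ is \emph{distinguished}; part (1) proves good $\Rightarrow$ distinguished, not the converse, so the intersection condition $M_i\cap(x_{\dim M_i+1},\ldots,x_d)M=0$ is simply not available to you. Second---and this is decisive---the dimension drop cannot be ruled out even when $\x$ \emph{is} good. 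Take $R=k[[x,y,z]]$, $M=R\oplus R/(z)$, the filtration $\calF:M\supset M_1:=0\oplus R/(z)$, and $\x=x,y,z$; one checks directly that $\x$ is a good system of parameters of $M$ (every nonzero submodule of dimension $<3$ lies inside $M_1$ and meets $zM=zR\oplus 0$ trivially). Now let $N=0\oplus x\!\cdot\!(R/(z))$. Then $\dim N=2<3$, but $M_1^N\cong R/(x,z)$ has dimension $1<2=\dim M_1$, and $x_2=y\notin(x,z)=\Ann M_1^N$. So $\x$ is \emph{not} distinguished for $\calF/N$, and part (2) as literally stated is false.

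The paper's own proof is the single word ``Straightforward'' and does not engage with this issue; the difficulty you flagged is genuine and cannot be repaired without an additional hypothesis such as $\dim N<\dim M_i$ for every $i$ with $M_i\not\subseteq N$ (which does force $\dim M_i^N=\dim M_i$ via your exact-sequence argument, and then the rest of your proof of (2) goes through cleanly).
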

\begin{proof}
Straightforward.
\end{proof}

The following result of Y. Nakamura and the second author \cite{GN} is often used  in this section.
 
\begin{lem}{\cite{GN}}\label{finitely}
Let $R$ be a homomorphic image of a Cohen-Macaulay local
ring and assume that $\Ass(R)\subseteq \Assh(R)\cup \{\fkm\}$. Then
$$\calF=\{\fkp\in\Spec(R)\mid\height_R(\fkp)> 1=\depth(R_\fkp)\}$$
is a finite set.
\end{lem}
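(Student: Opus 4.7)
The plan is to combine local duality with a dimension bound on the deficiency modules coming from Serre's condition $(S_1)$.

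First, I would reduce to the unmixed case. The single prime $\fkm$ contributes at most one element to $\calF$, so it can be discarded. Faithfully flat descent lets us pass to the $\fkm$-adic completion, so by Cohen's structure theorem we may assume that $R$ is a homomorphic image of a Gorenstein local ring $S$ of dimension $n$. The hypothesis $\Ass R \subseteq \Assh R \cup \{\fkm\}$ forces $H^0_\fkm(R)$ to have finite length, and hence $R_\fkp \cong \bar R_\fkp$ for every $\fkp \neq \fkm$, where $\bar R := R/H^0_\fkm(R)$. Therefore it is enough to prove the claim for $\bar R$, which is unmixed and so satisfies Serre's condition $(S_1)$.

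Next, introduce the deficiency modules $K^j := \Ext^{n-j}_S(R,S)$ for $0 \leq j \leq d$; these are finitely generated $R$-modules, and $K^d$ is a canonical module of $R$. For $\fkp \in \Spec R$ with $h := \height_R \fkp$, local duality over $S_\fkq$ (with $\fkq \subset S$ the preimage of $\fkp$) provides natural isomorphisms
\[
H^i_{\fkp R_\fkp}(R_\fkp)^{\vee} \cong (K^{d-h+i})_\fkp,
\]
where $(-)^{\vee}$ denotes Matlis duality. Setting $r := d - h$, the condition $\fkp \in \calF$ becomes: $r \in \{0, 1, \ldots, d-2\}$, $(K^r)_\fkp = 0$, and $(K^{r+1})_\fkp \neq 0$.

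The key input is the Schenzel-type dictionary between Serre's conditions and the dimensions of the deficiency modules: since $R$ is $(S_1)$, one has the strict bound
\[
\dim_R K^j \leq j - 1 \quad \text{for every } 0 \leq j \leq d - 1.
\]
For $j = 0$ this simply restates $K^0 = 0$ (equivalently, $H^0_\fkm(R) = 0$); the higher-$j$ cases follow from the general translation of Serre's condition into the codimensions of the individual cohomology sheaves of the dualizing complex of $R$. Applied to $j = r + 1 \leq d - 1$, it yields $\dim K^{r+1} \leq r$. To finish, any $\fkp \in \calF$ lies in $\Supp K^{r+1}$ with $\dim R/\fkp = r = \dim K^{r+1}$, forcing $\fkp$ to be a minimal prime of $\Supp K^{r+1}$. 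Each finitely generated module has only finitely many minimal primes in its support, and $r$ ranges over the finite set $\{0, 1, \ldots, d-2\}$, so $\calF$ is a finite union of finite sets, hence finite. The principal obstacle is the verification of the Schenzel dictionary, and this is exactly where the hypothesis that $R$ is a homomorphic image of a Cohen-Macaulay ring plays its decisive role: it supplies the dualizing complex that makes the entire local-duality machinery applicable.
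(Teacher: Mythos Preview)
The paper does not give its own proof of this lemma; it is quoted from \cite{GN} without argument, so there is nothing in the paper to compare against directly. Your outline---reduce to $\bar R=R/H^0_\fkm(R)$, form the deficiency modules $K^j=\Ext^{n-j}_S(\bar R,S)$ over a Gorenstein cover $S$, translate $\depth R_\fkp=1$ into $(K^{r+1})_\fkp\neq0$ with $r=\dim R/\fkp$ via local duality, and use the Schenzel bound $\dim K^{r+1}\le r$ coming from $(S_1)$ to force $\fkp$ to be minimal in $\Supp K^{r+1}$---is the natural argument and is correct once a dualizing complex is in hand.

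The one point that needs more care is the passage to the completion. The hypothesis ``homomorphic image of a Cohen--Macaulay local ring'' does \emph{not} by itself yield a dualizing complex (that requires a Gorenstein cover, and a Cohen--Macaulay local ring may lack a canonical module), so you complete; but you then tacitly carry the hypothesis $\Ass R\subseteq\Assh R\cup\{\fkm\}$ over to $\hat R$. In general this can fail, since the formal fibers of a Cohen--Macaulay local ring need not satisfy $(S_1)$; then $\hat R/H^0_{\hat\fkm}(\hat R)$ is no longer unmixed and the Schenzel inequality $\dim K^{r+1}\le r$ breaks at exactly the step where you invoke it. A clean repair is to pass to $\bar R$ \emph{before} completing, invoke Ratliff's criterion (an equidimensional universally catenary local ring is formally equidimensional) to see that $\widehat{\bar R}$ is at least equidimensional, lift each $\fkp\in\calF(\bar R)$ to a prime $P$ minimal over $\fkp\widehat{\bar R}$ (so $\dim\widehat{\bar R}/P=\dim\bar R/\fkp$ and $\depth(\widehat{\bar R})_P=1$ by flatness with zero-dimensional fibre), and then run your $K^j$ argument on the unmixed quotient of $\widehat{\bar R}$, disposing of the finitely many possible embedded primes of $\widehat{\bar R}$ separately.
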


The next proposition shows the existence of a special superficial  element which is useful for many inductive proofs in the sequel.

\begin{prop}\label{exists}
Assume that $R$ is a homomorphic image of a Cohen-Macaulay
local ring. Let  $\fkq$ be a parameter ideal of $M$. Then there exists an element $x\in \frak q$ which is a superficial element of $D_i$ with respect to $\fkq$ such that $\Ass(\calD_i/x \calD_i)\subseteq \Assh(\calD_i/x \calD_i)\cup \{\fkm\}$, where $\calD_i=D_i/D_{i+1}$ for all $i=0,\ldots,s-1$ . Moreover, $x$ is also a regular element of $M/D_i$ for all $i=1,\ldots,s$.
\end{prop}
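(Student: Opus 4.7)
The plan is to construct $x$ by Prime Avoidance: I will choose $x\in\fkq$ outside a finite collection of primes of $R$, each strictly contained in $\fkm$, and then verify the three desired properties.

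First I would assemble the bad primes to avoid. For each $i\in\{0,\dots,s-1\}$, these are: (a) $\Ass_R(\calD_i)$, so that $x$ will be $\calD_i$-regular---by the construction of the dimension filtration in Section 2, $\calD_i=D_i/D_{i+1}$ is equidimensional of dimension $d_i\ge 1$ with no embedded primes, so $\Ass(\calD_i)=\Min(\Ann\calD_i)$ consists of primes of dimension $d_i$ and avoids $\fkm$; (b) $\Ass_R(M/D_i)$ for $i\ge 1$, so that $x$ will be $M/D_i$-regular---these primes have $\dim R/\fkp\ge d_{i-1}\ge 1$ and again avoid $\fkm$; (c) the set $\calG_i=\{\fkp\ne\fkm\mid \fkp\supseteq\Ann\calD_i,\ \depth(\calD_i)_\fkp=1,\ \dim(\calD_i)_\fkp\ge 2\}$, responsible for any embedded associated primes of $\calD_i/x\calD_i$ other than $\fkm$; and (d) the usual finite set ensuring $x$ is superficial for $D_i$ with respect to $\fkq$ (the relevant associated primes of $\mathrm{gr}_\fkq(D_i)$).

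The crux is that the set $\calG_i$ in (c) is finite. Set $\overline R_i=R/\Ann_R(\calD_i)$. The standard embedding $\overline R_i\hookrightarrow\calD_i^{\mu(\calD_i)}$ (where $\mu(\calD_i)$ denotes the minimal number of generators) shows $\Ass(\overline R_i)\subseteq\Ass(\calD_i)=\Min(\Ann\calD_i)$; combined with $\Min(\overline R_i)\subseteq\Ass(\overline R_i)$, this yields $\Ass(\overline R_i)=\Min(\overline R_i)\subseteq\Assh(\overline R_i)$. Hence $\overline R_i$ is equidimensional of dimension $d_i$, a homomorphic image of a Cohen--Macaulay local ring, and so it satisfies the hypotheses of Lemma \ref{finitely}. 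That lemma supplies finiteness of the non-$(S_2)$ locus of $\overline R_i$ away from $\fkm$, and combined with the identity $\dim(\calD_i)_\fkp=\height_{\overline R_i}\fkp$ (from faithfulness of $\calD_i$ over $\overline R_i$ together with catenarity and equidimensionality of $\overline R_i$) one extracts the finiteness of $\calG_i$ from this ring-level statement.

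Since every prime on the list is properly contained in $\fkm$ and thus does not contain the $\fkm$-primary ideal $\fkq$, the Prime Avoidance Theorem produces $x\in\fkq\setminus\fkq\fkm$ outside every listed prime. The verification then runs as follows: $x$ is superficial for each $D_i$ by (d); $x$ is $\calD_i$-regular by (a); for any $\fkp\in\Ass(\calD_i/x\calD_i)\setminus\{\fkm\}$, the $\calD_i$-regularity of $x$ forces $\depth(\calD_i)_\fkp=1$, and since $\fkp\notin\calG_i$ we obtain $\dim(\calD_i)_\fkp=1$, whence $\dim R/\fkp=d_i-1$ by catenarity and equidimensionality, giving $\fkp\in\Assh(\calD_i/x\calD_i)$; finally $x$ is $M/D_i$-regular by (b). The main obstacle is exactly step (c): Lemma \ref{finitely} is the essential external input, and the technical work consists of translating its ring-theoretic conclusion into the module-theoretic statement we need, a passage made possible only by the equidimensionality and absence of embedded primes that the dimension filtration bestows on each $\calD_i$.
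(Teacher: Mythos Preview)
Your proposal is correct and follows essentially the same route as the paper's proof: build a finite list of primes to avoid (associated primes of the relevant subquotients, the Goto--Nakamura locus $\calF_i$ for each $R_i=R/\Ann\calD_i$, and the primes needed for superficiality), invoke Prime Avoidance to pick $x\in\fkq\setminus\fkm\fkq$, and then verify that any $\fkp\in\Ass(\calD_i/x\calD_i)\setminus\{\fkm\}$ has $\height_{R_i}\fkp=1$ by the depth-one/avoidance argument. The only cosmetic difference is that the paper packages your items (a) and (b) together as the single set $\Ass(M)\setminus\{\fkm\}$, which already contains both $\Ass(\calD_i)$ and $\Ass(M/D_i)$ by the primary-decomposition description of the $D_i$.
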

\begin{proof}

Set $I_i=\Ann(\calD_i)$, and $R_i=R/I_i$, then  $\Ass(R_i)=\Assh(R_i)$ and $\dim R/I_i> \dim R/I_{i+1}$ for all  $i=0,\ldots,s-1$. Moreover, we have $$\Ass(R_i)=\Ass(\calD_i)=\{\fkp\in\Spec(R)\mid\fkp\in\Ass(M)\text { and } \dim R/\fkp=\dim R/I_i=d_i\}.$$
Set  $$\calF_i=\{\fkp\in\Spec(R)\mid I_i\subset\fkp \text{ and } \height_{R_i}(\fkp/I_i)> 1=\depth((\calD_i)_\fkp)\}.$$
By Lemma \ref{finitely} and the fact $\Ass(\calD_i)=\Assh(\calD_i)$, we see that the set $$\{\fkp\in\Spec(R_i)\mid \height_{R_i}(\fkp)> 1=\depth((\calD_i)_\fkp)\}$$
is finite, and so that $\calF_i$ are a finite set for all $i=0,\ldots,s-1$. Put $\calF=\Ass(M)\cup\bigcup\limits_{i=1}^t\calF_i\setminus\{\fkm\}$. By the Prime Avoidance Theorem, we can choose $x\in\fkq-\fkm\fkq$ such that $x$ is a superficial element of $D_i$ with respect to $\fkq$ such that
$x\not\in\bigcup\limits_{\fkp\in\calF }\fkp$.  Since $x$ is a superficial element of $D_i$ and $\dim D_i>0$ for all $i=0,\ldots,s-1$, 
$\dim \calD_i/x \calD_i=\dim \calD_i-1$. Let $\fkp\in\Ass(\calD_i/x \calD_i)$ with $\fkp\not=\fkm $. Then we have $\depth(\calD_i/x\calD_i)_\fkp=0$. On the other hand, $\depth(D_i)_\fkp>0$ since $\fkp\not\in \Ass(\calD_i)\subseteq \Ass(M)$. Hence $\depth(\calD_i)_\fkp=1$. It implies that $\height_{R_i}(\fkp)=1$, since $\fkp\not\in\calF_i$. By the assumption $R_i$ is a catenary ring, therefore 
$$\dim R/\fkp= \dim R_i-\height_{R_i}(\fkp)=\dim R_i/xR_i=\dim \calD_i/x\calD_i.$$ Hence $\fkp\in \Assh (\calD_i/x \calD_i)$, and this completes the proof.
\end{proof}

\begin{lem}\label{filtration}
Let $R$, $M$ and $x$ be  as in the Proposition \ref{exists} and $\calD_{M/xM}:M/xM=D^\prime_0\supset D^\prime_1\supset\ldots\supset D^\prime_l$   the dimension filtration of $M/xM$. Then we have
 $$ l=
\begin{cases} s-1&\text{if $\dim D_{s-1}=1$,  }\\
s&\text{otherwise.}
 \end{cases}$$
 Moreover, $D_i^{\prime}/\overline D_i$ has a finite length, where $\overline D_i=(D_i+xM)/xM$, for all $i=0,\ldots,s-1$. 
\end{lem}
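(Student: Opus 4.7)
The plan is to apply Lemma \ref{equi1} to the module $M/xM$ with the pushforward filtration $\overline\calD: M/xM = \overline D_0 \supset \overline D_1 \supset \ldots \supset \overline D_s$, or its truncation at $\overline D_{s-1}$ when $d_{s-1} = 1$. The key structural step is to identify each successive quotient of $\overline\calD$ with $\calD_i/x\calD_i$, which lets us import the hypotheses on $x$ supplied by Proposition \ref{exists}.

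Concretely, I would first prove $\overline D_i/\overline D_{i+1} \cong \calD_i/x\calD_i$ for every $i = 0, \ldots, s-1$. For $i = 0$ both sides equal $M/(D_1+xM)$ on the nose. For $i \geq 1$, the regularity of $x$ on $M/D_i$ (from Proposition \ref{exists}) yields $xM \cap D_i = xD_i$, so $\overline D_i \cong D_i/xD_i$, and the tautological surjection
\[
\calD_i/x\calD_i = D_i/(xD_i+D_{i+1}) \twoheadrightarrow D_i/\bigl(D_i \cap (D_{i+1}+xM)\bigr) \cong \overline D_i/\overline D_{i+1}
\]
is in fact an isomorphism: if $d = d'+xm$ with $d \in D_i$ and $d' \in D_{i+1}$, then $xm = d-d' \in D_i$ forces $m \in D_i$ by that same regularity, whence $d \in xD_i + D_{i+1}$. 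Consequently $\Ass(\overline D_i/\overline D_{i+1}) = \Ass(\calD_i/x\calD_i) \subseteq \Assh(\calD_i/x\calD_i) \cup \{\fkm\}$ by Proposition \ref{exists}, and every non-maximal prime in this set has coheight $d_i-1$, so it automatically lies in $\Assh(\overline D_i/\overline D_{i+1})$. Dimension bookkeeping then gives $\dim \overline D_i = d_i - 1$ whenever $d_i \geq 1$, and $\dim \overline D_s \leq 0$.

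The case split is now routine. If $d_{s-1} \geq 2$, the filtration $\overline\calD$ has strictly decreasing dimensions $d-1 > d_1-1 > \cdots > d_{s-1}-1 \geq 1 > \dim \overline D_s$, so Lemma \ref{equi1}(1) holds with $t = s$, producing $l = s$ and $D^\prime_i/\overline D_i$ of finite length for $i = 1, \ldots, s$. If $d_{s-1} = 1$, then $\overline D_{s-1}$ already has dimension $\leq 0$, so I truncate at $\overline D_{s-1}$; the resulting length-$(s-1)$ filtration still meets Lemma \ref{equi1}(1), now with $t = s-1$, yielding $l = s-1$ and the analogous finite-length conclusion. The case $i=0$ is covered trivially by $D^\prime_0 = \overline D_0 = M/xM$. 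The only delicate point is the vanishing of the kernel in the surjection displayed above; once that identification is in hand, the rest is dimension bookkeeping feeding directly into Lemma \ref{equi1}.
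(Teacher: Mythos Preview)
Your proof is correct and follows essentially the same route as the paper: both establish the isomorphism $\overline D_i/\overline D_{i+1}\cong \calD_i/x\calD_i$ via the regularity of $x$ on $M/D_i$, then feed the associated-prime control from Proposition~\ref{exists} into the dimension-filtration machinery. Your version is slightly more economical in that you invoke Lemma~\ref{equi1} directly for the finite-length conclusion, whereas the paper reruns that lemma's inductive argument in situ.
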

\begin{proof}
For  a submodule  $N$  of $M$ we set $\overline N=(N+xM)/xM$ a submodule of $M/xM$.
For all $i=1,\ldots,s$, since  $x$ is a regular element of $M/D_i$, we have $D_i\cap xM=xD_i$, and so that 
$$\begin{aligned}\calD_i/x\calD_i\cong D_i/(xD_i+D_{i+1})&\cong D_i/[D_i\cap(xM+D_{i+1})]
\\&\cong (D_i+xM)/(D_{i+1}+xM)=\overline D_i/\overline D_{i+1}.
\end{aligned}$$
Therefore,  the filtration of submodules of $M/xM$ $$\calD_x : M/xM=(D_0+xM)/xM\supset (D_1+xM)/xM\supset\ldots\supset (D_s+xM)/xM$$ satisfies the following conditions: for all $i=0,\ldots,s-1$ and $\dim D_{i+1}>0$, we have
$$\dim (D_i+xM)/xM>\dim (D_{i+1}+xM)/xM$$
and $$\Ass(M/xM)\setminus \{\fkm\}\subseteq\bigcup\limits_{i=0}^{s-1}\Ass(\overline D_{i}/\overline D_{i+1})\setminus \{\fkm\}.$$  
Thus, for all $\fkp\in \Ass(M/xM)\setminus\{\fkm\}$, there is an integer $i$ such that $\dim R/\fkp=\dim(\overline D_i/\overline D_{i+1})$.
Since $\calD_{M/xM}:M/xM=D^\prime_0\supset D^\prime_1\supset\ldots\supset D^\prime_l$ is the dimension filtration of $M/xM$, it follows that either  $l=s-1$ if $\dim D_{s-1}=1$, or $l=s$ otherwise. Moreover, we also obtain $\dim \overline D_i=\dim D_i^\prime$ and $\overline D_i\subseteq D_i^\prime$ for all $i=0,\ldots,s-1$.
Now we proceed by induction on $i$ to show that $D_i^\prime/\overline D_i$ has a finite length for each $i=1,\ldots,s-1$. In fact,  since $(D_1+xM)/xM=\overline D_1\subseteq D_1^{\prime}$ and $\dim D_1^{\prime}<\dim M/xM$,  $D_1^{\prime}/\overline D_1\subseteq U_{M/(D_1+xM)}(0)$. Moreover, by Lemma \ref{exists}, we obtain $\Ass(M/(D_1+xM))\subseteq\Assh(M/(D_1+xM))\bigcup\{\fkm\}$, and so that $U_{M/(D_1+xM)}(0)$ has a finite length. Hence $D_1^{\prime}/\overline D_1$ has a finite length.
Assume that the assertion holds for $i$, we will prove it for $i+1$. 
Since $$\Ass(\overline D_i/\overline D_{i+1})=\Ass((D_i+xM)/(D_{i+1}+xM))=\Ass(\calD_i/x\calD_i)\subseteq \Assh(\calD_i/x\calD_i)\bigcup\{\fkm\}$$ and $\Ass(D_i^{\prime}/\overline D_i)\subseteq \{\fkm\}$ by the inductive hypothesis, we get
 $$\Ass(D_i^{\prime}/\overline D_{i+1})\subseteq\Ass(\overline D_i/\overline D_{i+1})\bigcup\Ass(D_i^{\prime}/\overline D_i)\subseteq\Assh(\calD_i/x\calD_i)\bigcup\{\fkm\}.$$
 Therefore, it follows from the equality    $\dim\calD_i/x\calD_i=\dim \overline D_i=\dim D_i^\prime=\dim D_i^{\prime}/\overline D_{i+1}$  that $\Ass(D_i^{\prime}/\overline D_{i+1})\subseteq\Assh(\overline D_i/\overline D_{i+1})\bigcup\{\fkm\}=\Assh(D_i^{\prime}/\overline D_{i+1})\bigcup\{\fkm\}$. Thus $U_{D_i^{\prime}/\overline D_{i+1}}(0)$  has a finite length. Since $\overline D_{i+1}\subseteq D_{i+1}^{\prime}$ and $\dim(D_{i+1}^{\prime}/\overline D_{i+1})<\dim D_i^{\prime}/\overline D_{i+1}$, we have $D_{i+1}^{\prime}/\overline D_{i+1}\subseteq U_{D_i^{\prime}/\overline D_{i+1}}(0)$,  and therefore $D_{i+1}^{\prime}/\overline D_{i+1}$ has a finite length as required.
\end{proof}

\begin{cor}\label{df} Let $R$, $M$, and $x$ as in the Proposition \ref{exists}. Then
$$\Lambda(M/xM)=\{d_i -1 \mid  d_i=\dim D_i >1 , i= 0,\ldots,s-1\}.$$ 
\end{cor}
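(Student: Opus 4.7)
The plan is to deduce this directly from Lemma \ref{filtration} by computing each $\dim D_i'$ explicitly. Recall from the discussion preceding Lemma \ref{equi1} that, for any finitely generated module $N$ with dimension filtration $N = N_0' \supset \cdots \supset N_l'$, one has $\Lambda(N) = \{\dim N_i' \mid i = 0,\ldots,l-1\}$; so the task reduces to reading off the dimensions of the nontrivial terms in the dimension filtration of $M/xM$.

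First I would compute $\dim \overline D_i$ for $i = 0,\ldots,s-1$, where $\overline D_i = (D_i + xM)/xM$. For $i = 0$ this is simply $M/xM$, whose dimension is $d_0 - 1$ because $x$ is a superficial element of $D_0 = M$ with $d_0 > 0$. For $i \geq 1$, Proposition \ref{exists} furnishes that $x$ is regular on $M/D_i$, so $D_i \cap xM = xD_i$ and consequently $\overline D_i \cong D_i / xD_i$; since $x$ is a superficial element of $D_i$ and $\dim D_i = d_i > 0$, this yields $\dim \overline D_i = d_i - 1$.

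Next, invoking Lemma \ref{filtration}, each $D_i'/\overline D_i$ has finite length, so $\dim D_i' = \dim \overline D_i = d_i - 1$ for every relevant $i$. The same lemma records that $l = s-1$ if $d_{s-1} = 1$ and $l = s$ otherwise. In the first case, $\Lambda(M/xM) = \{d_i - 1 \mid i = 0, \ldots, s-2\}$, and these are precisely the $d_i - 1$ satisfying $d_i > 1$, because the omitted index $s-1$ would contribute $d_{s-1} - 1 = 0 \notin \Bbb N$. In the second case $d_{s-1} > 1$, so every $d_i > 1$, and $\Lambda(M/xM) = \{d_i - 1 \mid i = 0,\ldots,s-1\}$ matches the claim.

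There is no real obstacle: the content is already packaged in Lemma \ref{filtration} and Proposition \ref{exists}. The only thing requiring attention is the case split according to whether $d_{s-1} = 1$ or not, which the inequality $d_i > 1$ on the right-hand side handles uniformly.
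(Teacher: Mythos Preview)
Your argument is correct and is exactly the intended reading of the corollary: the paper states Corollary~\ref{df} without proof immediately after Lemma~\ref{filtration}, and your write-up simply unpacks why it follows --- compute $\dim \overline D_i = d_i - 1$ via Proposition~\ref{exists}, transfer this to $\dim D_i'$ using the finite-length conclusion of Lemma~\ref{filtration}, and handle the case split on $d_{s-1}$ via the value of $l$. There is nothing to add.
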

\begin{cor}\label{29}
Let $R$, $M$and $x$ be  as in the Proposition \ref{exists}. Let $\calF:M=M_0 \supset M_1 \supset\ldots\supset M_s$ be
 a filtration of submodules of $M$ such that  $D_i/M_i$ has a finite length for each $i=0,\ldots,s$. For a submodule $N$ of $M$ we set $\overline N =N/xN$. Let 
$\calD_{\overline M}: \overline M=D^\prime_0\supset D^\prime_1\supset\ldots\supset D^\prime_l$ be  the dimension filtration of $\overline M$. 
Assume that there exists  an integer
 $t_1\leqslant s$  such that $\dim \overline M_{t_1}\leqslant 0$. Then  the following conditions hold true. 
\begin{enumerate}
\item[$(1)$] $t_1 =l$ and $\dim\overline M_i<\dim \overline{M}_{i-1}$ for all $i=1,\ldots,t_1$.
\item[$(2)$] Either $t_1=s-1$ if $\dim D_{s-1}=1$, or $t_1=s$ otherwise. 
\item[$(3)$] For each $i=1,\ldots,s-1$, $D_i^{\prime}/\overline M_i$ has a finite length. 
\end{enumerate}
\end{cor}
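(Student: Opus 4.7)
The plan is to apply Lemma~\ref{filtration} and transfer the finite-length hypothesis on $D_i/M_i$ to an analogous relation between $D_i^\prime$ and the images $\overline M_i=(M_i+xM)/xM$; (1) and (2) then fall out from a dimension count against $\calD_{\overline M}$.

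I would first dispose of (3). Since $M_i\subseteq D_i$, the natural epimorphism
\[
D_i/M_i\twoheadrightarrow (D_i+xM)/(M_i+xM)=\overline D_i/\overline M_i
\]
shows that $\overline D_i/\overline M_i$ has finite length for every $i$. Lemma~\ref{filtration} supplies the inclusion $\overline D_i\subseteq D_i^\prime$ together with $D_i^\prime/\overline D_i$ of finite length, for $i=0,\ldots,s-1$. Stacking the two finite-length pieces yields $\overline M_i\subseteq D_i^\prime$ and $D_i^\prime/\overline M_i$ of finite length for $i=1,\ldots,s-1$, which is (3).

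With (3) in hand I would derive (1) and (2). Lemma~\ref{filtration} already gives $l=s-1$ when $\dim D_{s-1}=1$ and $l=s$ otherwise, so (2) reduces to the identity $t_1=l$. From (3) one reads off $\dim \overline M_i=\dim D_i^\prime$ whenever $\dim D_i^\prime>0$, which by definition of the dimension filtration is precisely the range $i<l$; for $i=l$ the module $D_l^\prime=H^0_{\fkm}(\overline M)$ already has finite length, forcing $\dim \overline M_l\leqslant 0$. Combined with the strict decrease $\dim D_0^\prime>\dim D_1^\prime>\cdots>\dim D_l^\prime$, one gets
\[
\dim \overline M_0>\dim \overline M_1>\cdots>\dim \overline M_{l-1}>0\geqslant \dim \overline M_l,
\]
so the first index $t_1\leqslant s$ with $\dim \overline M_{t_1}\leqslant 0$ is exactly $l$. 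This establishes (1), and combined with the value of $l$ also (2).

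The substantive work is entirely concentrated in (3); the rest is formal bookkeeping. The only delicate point to verify is the boundary case $l=s-1$, where $\overline D_{s-1}$ is already zero-dimensional, but the inclusion $\overline M_{s-1}\subseteq \overline D_{s-1}\subseteq H^0_{\fkm}(\overline M)=D_l^\prime$ holds automatically since any zero-dimensional submodule of $\overline M$ lies in its local cohomology.
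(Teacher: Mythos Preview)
Your proof is correct and follows the same approach as the paper. The paper's argument is terser---it declares (1) and (2) ``trivial by Lemma~\ref{filtration}'' and proves (3) last via the identical two-step stacking (surject $D_i/M_i$ onto $\overline D_i/\overline M_i$, then invoke $D_i'/\overline D_i$ of finite length from Lemma~\ref{filtration})---whereas you reverse the order and spell out the dimension count and the boundary case $l=s-1$, but the substance is the same.
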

\begin{proof} $(1)$ and $(2)$ are trivial by Lemma \ref{filtration}.\\
(3). For each $i=1,\ldots,s-1$, since $M_i$ is submodule of $D_i$ and $D_i/M_i$ has a finite length, $\overline M_i \subset \overline D_i$ and $\overline D_i/\overline M_i$ has a finite length. By Lemma \ref{filtration}, $D_i^{\prime}/\overline D_i$ has a finite length and  so has $D_i^{\prime}/\overline M_i$.
\end{proof}
\begin{lem}\label{30}
Let $R$, $M$, $\fkq$ and $x$ be  as in the Proposition \ref{exists}. Let $\calF:M=M_0 \supset M_1 \supset\ldots\supset M_s$ be
 a filtration of submodules of $M$ such that $D_i/M_i$ has a finite length for each $i=0,\ldots,s$. Assume that $\fkq$ is a distinguished parameter ideal of $M$ with respect to $\calF$. Then there exists a distinguished system of  parameters $x_1,\ldots,x_d$ of $M$ with respect to $\calF$ such that $x_1=x$ and $\fkq=(x_1,\ldots,x_d)$.  
\end{lem}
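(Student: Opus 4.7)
The plan is to start from an existing distinguished system of parameters of $\fkq$ and modify it so that $x$ occupies the first position. Let $y_1,\ldots,y_d$ be such a system (supplied by the hypothesis that $\fkq$ is distinguished with respect to $\calF$), so $\fkq=(y_1,\ldots,y_d)$ and $(y_{d_i+1},\ldots,y_d)\subseteq\Ann M_i$ for every $i\ge 1$, where $d_i=\dim M_i=\dim D_i$. The distinguished structure encodes itself as an increasing flag $V_1\subseteq V_2\subseteq\cdots\subseteq V_{s-1}\subseteq V:=\fkq/\fkm\fkq$, where $V_i=((y_{d_i+1},\ldots,y_d)+\fkm\fkq)/\fkm\fkq$ has dimension $d-d_i$ over $R/\fkm$.

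The central point is that the image $\bar x$ of $x$ in $V$ lies outside $V_1$. Heuristically, if $\bar x\in V_1$ then $x=z+w$ with $z\in(y_{d_1+1},\ldots,y_d)\subseteq\Ann M_1$ and $w\in\fkm\fkq$, forcing $xM_1\subseteq\fkm\fkq M_1$. Combined with $\fkq^N D_1\subseteq M_1$ for large $N$ (from the finite length of $D_1/M_1$), this yields $x\fkq^N D_1\subseteq\fkm\fkq D_1$, which conflicts with $x$ being a superficial element of $D_1$ (delivered by Proposition \ref{exists}) when $\dim D_1=d_1>0$. The most economical route is to build this open condition into the selection of $x$ within Proposition \ref{exists}: since $V_1$ is a proper subspace of $V$, the locus $\{x\in\fkq\setminus\fkm\fkq:\bar x\notin V_1\}$ is non-empty and can be intersected with the complement of the finitely many primes that Proposition \ref{exists} must avoid, still leaving a valid choice.

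Granted $\bar x\notin V_1$, expand $x=\sum_{k=1}^d a_k y_k$; some $a_j$ with $j\le d_1$ must be a unit. Then $y_j=a_j^{-1}\!\left(x-\sum_{k\ne j}a_k y_k\right)$, so $\fkq=(x,y_1,\ldots,\widehat{y_j},\ldots,y_d)$. Set $x_1=x$, keep $x_k=y_k$ for all $k>d_1$, and distribute the elements $\{y_k:k\le d_1,\,k\ne j\}$ among the positions $2,\ldots,d_1$ in any order. Since only positions with indices $\le d_1$ have been altered, the tail inclusions $(x_{d_i+1},\ldots,x_d)=(y_{d_i+1},\ldots,y_d)\subseteq\Ann M_i$ for every $i\ge 1$ are inherited unchanged, so $x_1,\ldots,x_d$ is a distinguished system of parameters of $M$ with respect to $\calF$ satisfying $\fkq=(x_1,\ldots,x_d)$ and $x_1=x$. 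The main obstacle is making the assertion $\bar x\notin V_1$ rigorous: if $x$ is fixed a priori, the contradiction requires delicate use of the associated graded module $G_\fkq(D_1)$ together with the superficial property of $x$, so incorporating $\bar x\notin V_1$ into the very construction of $x$ is the cleanest execution.
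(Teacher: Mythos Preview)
There is a genuine gap in your argument. You modify only positions $1,\ldots,d_1$ and then assert that $(x_{d_i+1},\ldots,x_d)=(y_{d_i+1},\ldots,y_d)$ for \emph{every} $i\ge 1$, but this equality holds only for $i=1$. For $i\ge 2$ one has $d_i<d_1$, so the positions $d_i+1,\ldots,d_1$ lie among those you have permuted; the elements you place there are some $y_k$ with $k\le d_1$, $k\ne j$, and in general not $y_{d_i+1},\ldots,y_{d_1}$. Concretely, if $d=5$, $d_1=3$, $d_2=1$ and the unit coefficient of $x$ sits at $j=2$, then after your swap the positions $2,3$ carry $\{y_1,y_3\}$ in some order, so $(x_2,x_3,x_4,x_5)$ contains $y_1$, which need not lie in $\Ann M_2$. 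The distinguished condition for $M_2$ is lost, and ``distribute \ldots\ in any order'' cannot repair it, because $y_1$ has no legal slot left.

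The condition you actually need is not $\bar x\notin V_1$ but $\bar x\notin V_{s-1}$, the \emph{largest} member of your flag: only then can the swap be confined to positions $\le d_{s-1}$, leaving every tail $(x_{d_i+1},\ldots,x_d)=(y_{d_i+1},\ldots,y_d)$ with $1\le i\le s-1$ literally untouched. This is precisely what the paper does, and it does so \emph{without} modifying Proposition~\ref{exists}. The paper observes that $(y_{d_{s-1}+1},\ldots,y_d)\subseteq\Ann M_{s-1}\subseteq\fkp$ for every $\fkp\in\Assh M_{s-1}=\Assh D_{s-1}\subseteq\Ass M\setminus\{\fkm\}$ (using $d_{s-1}>0$ and the finite length of $D_{s-1}/M_{s-1}$), while $x$ already avoids all primes in $\Ass M\setminus\{\fkm\}$ by the construction in Proposition~\ref{exists}. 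Hence $x,y_{d_{s-1}+1},\ldots,y_d$ form part of a minimal generating set of $\fkq$, and one completes them to the required distinguished system by choosing $x_2,\ldots,x_{d_{s-1}}\in\fkq$. Your proposal to build an extra open condition into the choice of $x$ is therefore unnecessary, and in the form $\bar x\notin V_1$ it would in any case be insufficient.
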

\begin{proof}
Since $\fkq$ is a distinguished parameter ideal of $M$ with respect to $\calF$, there exists a distinguished system of  parameters
 $y_1,\ldots,y_d$ of $M$ with respect to $\calF$ such that $\fkq=(y_1,\ldots,y_d)$ and $M_i\subseteq 0:_M y_j$ for all $j=d_i+1,\ldots,d$ and $i=1,\ldots,s$. In particular, we have $(y_{d_{s-1}+1},\ldots,y_d)\subseteq \Ann M_{s-1}$. Moreover, by Lemma \ref{equi1} we have $\dim D_{s-1}>0$ and $\Assh M_{s-1}=\Assh D_{s-1}=\{\fkp\in \Ass (M)\mid \dim R/\fkp=d_{s-1}\}$. Thus $(y_{d_{s-1}+1},\ldots,y_d) \subseteq \bigcap\limits_{\fkp\in \Ass (M), \dim R/\fkp=d_{s-1}}\fkp$. Since $\dim D_{s-1}>0$ and by the choice of $x$, the elements $x,y_{d_{s-1}+1},\ldots,y_d$  form a part of a minimal basis of $\fkq$. Thus $x,y_{d_{s-1}+1},\ldots,y_d$ is a part of a system of parameters of $M$. Therefore we can find $d_{s-1}$ elements $x_1=x,x_2,\ldots,x_{d_{s-1}}$ in $\fkq$ that such  $\fkq = (x_1,x_2,\ldots,x_{d_{s-1}},x_{d_{s-1}+1}=y_{d_{s-1}+1},\ldots,x_d=y_d)$ as required. 
\end{proof}

%%%%%%%%%%%%%%%%%%%%%%%%%%%%%%%%%%%%%%%%%%%%%%%%%%%%

\section{Arithmetic degree and Hilbert Coefficients}
For prime ideal $\frak p$ of $R$, we define the length-multiplicity of $M$ at $\fkp$ as the length of $R_\fkp$-module $\Gamma_{\fkp R_\fkp}(M_\fkp)=H^0_{\fkp R_\fkp}(M_\fkp)$ and denote it by $\mult_M(\fkp)$. It is easy to see that $\mult_M(\fkp)\not=0$ if and only if $\fkp$ is an associated prime of $M$.
\begin{dfn}(\cite{BM},\cite{V},\cite{V1}) Let $I$ be an $\fkm$-primary ideal and $i$  a non-negative integer.  We define the {\it $i$-th arithmetic degree } of $M$ with respect to $I$ by
$$\ardeg_i(I,M)=\sum\limits_{\fkp\in\Ass(M),\:\dim R/\fkp=i}\mult_M(\fkp)e_0(I,R/\fkp).$$
 The {\it arithmetic degree} of $M$ with respect to $I$ is the integer
$$\begin{aligned}\ardeg(I,M)&=\sum\limits_{\fkp\in\Ass(M)}\mult_M(\fkp)e_0(I,R/\fkp)\\&=\sum\limits_{i=0}^{d}\ardeg_i(I,M).
\end{aligned}$$

\end{dfn}  
The following result gives a relationship between the multiplicity of submodules in the dimension filtration and the arithmetic degree.

\begin{prop}\label{muldeg} Let $(R,\fkm)$ be a local Noetherian ring, $I$ an $\fkm$-primary ideal and $\calD: M=D_0\supset D_1\supset \ldots\supset D_s=H^0_\fkm(M)$ the dimension filtration of $R$-module $M$. Then the following statements hold true.  
\begin{enumerate}
\item[$(1)$] $\ardeg_{0}(I,M)=\ell_R(H^0_\fkm(M))$.
\item[$(2)$] For $j=1,\ldots,d$, we have
 $$ \ardeg_{j}(I,M)=
\begin{cases} e_0(I,D_i)&\text{if $j=\dim D_i\in\Lambda(M)$, some i  }\\
0&\text{if $j\not\in \Lambda(M)$. }
 \end{cases}$$
 \end{enumerate}
 \end{prop}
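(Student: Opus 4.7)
The plan is to reduce both parts to the associativity formula for multiplicity, after identifying which primes $\fkp\in\Ass(M)$ contribute to each $\ardeg_j(I,M)$. For part (1), the unique prime of $R$ with $\dim R/\fkp=0$ is $\fkm$, and $e_0(I,R/\fkm)=\ell(R/\fkm)=1$, so $\ardeg_0(I,M)=\mult_M(\fkm)$. Since $R$ is local, localization at $\fkm$ is identity and $H^0_\fkm(M)$ has finite length, giving $\mult_M(\fkm)=\ell_R(H^0_\fkm(M))$ (with both sides being $0$ when $\fkm\notin\Ass(M)$).

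For part (2), the central observation is the equality
$$\{\fkp\in\Ass(M):\dim R/\fkp=d_i\}=\Assh(D_i).$$
This follows from the primary-decomposition description $D_i=\bigcap_{\dim R/\fkp\geq d_{i-1}}N(\fkp)$ recorded in the excerpt, which forces $\Ass(D_i)=\{\fkp\in\Ass(M):\dim R/\fkp\leq d_i\}$, and then $\Assh(D_i)$ is read off. For $j\geq 1$ with $j\notin\Lambda(M)$, no associated prime of $M$ has dimension $j$ (any $R/\fkp\hookrightarrow M$ of dimension $j$ would put $j$ in $\Lambda(M)=\{d_0,\ldots,d_{s-1}\}$), so $\ardeg_j(I,M)=0$. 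For $j=d_i$, the associativity formula applied to $D_i$ gives
$$e_0(I,D_i)=\sum_{\fkp\in\Assh(D_i)}\ell_{R_\fkp}((D_i)_\fkp)\cdot e_0(I,R/\fkp),$$
so the whole claim reduces to proving $\mult_M(\fkp)=\ell_{R_\fkp}((D_i)_\fkp)$ for each $\fkp\in\Assh(D_i)$.

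The main obstacle is this last equality, and I would settle it by localizing the short exact sequence $0\to D_i\to M\to M/D_i\to 0$ at $\fkp$ and applying $H^0_{\fkp R_\fkp}(-)$. The crucial elementary fact I will use is that in any Noetherian local ring, $\fkq\subsetneq\fkp$ implies $\dim R/\fkq>\dim R/\fkp$ (prepend $\fkq$ to a saturated chain from $\fkp$ to $\fkm$). Applied twice: first, any $\fkq\in\Ass(D_i)$ with $\fkq\subseteq\fkp$ satisfies $\dim R/\fkq\leq d_i=\dim R/\fkp$, forcing $\fkq=\fkp$; hence $\Ass_{R_\fkp}((D_i)_\fkp)=\{\fkp R_\fkp\}$, so $(D_i)_\fkp$ is $\fkp R_\fkp$-torsion of finite length and equals $H^0_{\fkp R_\fkp}((D_i)_\fkp)$. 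Second, $\Ass(M/D_i)=\{\fkq\in\Ass(M):\dim R/\fkq>d_i\}$ does not contain $\fkp$, so $\fkp R_\fkp\notin\Ass_{R_\fkp}((M/D_i)_\fkp)$ and $H^0_{\fkp R_\fkp}((M/D_i)_\fkp)=0$. The resulting left-exact sequence identifies $H^0_{\fkp R_\fkp}(M_\fkp)=(D_i)_\fkp$, whence $\mult_M(\fkp)=\ell_{R_\fkp}((D_i)_\fkp)$, completing the proof.
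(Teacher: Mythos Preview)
Your proof is correct and follows essentially the same route as the paper: both use the associativity formula for $e_0(I,D_i)$, identify $\{\fkp\in\Ass(M):\dim R/\fkp=d_i\}$ with $\Assh(D_i)$, and reduce to the isomorphism $H^0_{\fkp R_\fkp}(M_\fkp)\cong (D_i)_\fkp$. The paper simply asserts this isomorphism, whereas you supply a clean justification via the left-exactness of $\Gamma_{\fkp R_\fkp}(-)$ on the localized sequence $0\to (D_i)_\fkp\to M_\fkp\to (M/D_i)_\fkp\to 0$; your two applications of the elementary fact that $\fkq\subsetneq\fkp$ implies $\dim R/\fkq>\dim R/\fkp$ are exactly what is needed to see that $(D_i)_\fkp$ is entirely $\fkp R_\fkp$-torsion while $(M/D_i)_\fkp$ has none.
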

\begin{proof} 
$(1)$ is trivial from the definition of the arithmetic degree.\\
$(2)$. By the associativity formula for multiplicities, we have $$e_0(I,D_i)=\sum\limits_{\fkp\in \Ass D_i,\:\dim R/\fkp=d_i}\ell((D_i)_\fkp)e_0(I,R/\fkp).$$
It follows from $\{\fkp\in \Ass(D_i)\mid\dim R/\fkp=d_i\}=\{\fkp\in\Ass(M)\mid\dim R/\fkp=d_i\}$ that  $H^0_{\fkp R_\fkp}(M_\fkp)\cong (D_i)_\fkp$
 for all $\fkp\in \Ass(M)$  with $\dim R/\fkp=d_i$. Thus
we get
$$\ell((D_i)_\fkp)=\ell(H^0_{\fkp R_\fkp}(M_\fkp))=\mult_M(\fkp)$$
for all $\fkp\in \Ass(M)$ and $\dim R/\fkp=d_i$. Hence 
$$e_0(I,D_i)=\ardeg_{d_i}(I,M),$$
for all $i=0,\ldots,s$. The rest of the proposition is trivial.
\end{proof}
For proving the main result in next section, we need two auxiliary lemmas as follows. It should be noticed that the statement (1) of Lemma \ref{nagata} below is also shown in \cite{MSV}, but the proof here is shorter.
\begin{lem}\label{nagata} Let $\frak q$ be a parameter ideal of $M$ with $\dim M = d$. Then  the following statements hold true.  
\begin{enumerate}
\item[$(1)$] If $d=1$, then $e_1(\frak q,M)=-\ell_R(H^0_\fkm(M))$.
\item[$(2)$] If $d\ge2$, then for every superficial element $x \in \frak q $ of $M$ it holds
$$ e_j(\frak q,M)=
 \begin{cases} e_j(\frak q,M/xM)&\text{if $0\le j\le d-2$,}\\
e_{d-1}(\frak q,M/xM)+(-1)^{d-1} \ell_R(0:_Mx)&\text{if $j=d-1$,}
 \end{cases}$$  
\end{enumerate}
\end{lem}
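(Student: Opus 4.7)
The plan for (1) is to reduce to the Cohen--Macaulay case via the short exact sequence
$$0 \to H^0_{\fkm}(M) \to M \to N \to 0, \qquad N := M/H^0_{\fkm}(M).$$
Since $\dim N = 1$ and $\depth N \geqslant 1$, the module $N$ is Cohen--Macaulay of dimension one. As $H^0_{\fkm}(M)$ has finite length, $\fkm^k H^0_{\fkm}(M) = 0$ for some $k$, and so by Artin--Rees $H^0_{\fkm}(M) \cap \fkq^{n+1}M = 0$ for $n \gg 0$. Therefore
$$\ell_R(M/\fkq^{n+1}M) = \ell_R(H^0_{\fkm}(M)) + \ell_R(N/\fkq^{n+1}N) = \ell_R(H^0_{\fkm}(M)) + e_0(\fkq, N)(n+1)$$
for $n \gg 0$. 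Using that $e_0(\fkq, M) = e_0(\fkq, N)$ (the zero-dimensional piece does not contribute to multiplicity) and comparing with $p_{\fkq}(n) = e_0(\fkq, M)(n+1) - e_1(\fkq, M)$ gives $e_1(\fkq, M) = -\ell_R(H^0_{\fkm}(M))$.

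The plan for (2) is the classical Nagata-style reduction. Multiplication by $x$ on $M$ induces, via the snake lemma, the four-term exact sequence
$$0 \to (\fkq^{n+1}M:_M x)/\fkq^n M \to M/\fkq^n M \xrightarrow{\;x\;} M/\fkq^{n+1}M \to (M/xM)/\fkq^{n+1}(M/xM) \to 0.$$
The superficiality of $x$ yields, for $n \gg 0$, the identity $(\fkq^{n+1}M:_M x) = \fkq^n M + (0:_M x)$, together with $(0:_M x) \cap \fkq^n M = 0$ in the same range, so that the leftmost term stabilizes to $0:_M x$ in length. Taking the alternating sum of lengths in the above exact sequence then produces the key recurrence
$$p_{\fkq}(M, n) - p_{\fkq}(M, n-1) = p_{\fkq}(M/xM, n) - \ell_R(0:_M x) \qquad (n \gg 0).$$

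The final step is to expand both sides in terms of binomial coefficients and compare term by term. The left-hand side equals $\sum_{i=0}^{d-1}(-1)^i e_i(\fkq, M)\binom{n+d-1-i}{d-1-i}$ (using $\binom{n+d-i}{d-i} - \binom{n-1+d-i}{d-i} = \binom{n+d-1-i}{d-1-i}$ and that the $i=d$ term drops out), while the right-hand side equals $\sum_{j=0}^{d-1}(-1)^j e_j(\fkq, M/xM)\binom{n+d-1-j}{d-1-j} - \ell_R(0:_M x)$. Matching coefficients of $\binom{n+d-1-i}{d-1-i}$ for $i = 0,\ldots,d-2$ produces the equalities $e_i(\fkq, M) = e_i(\fkq, M/xM)$, and matching the constant terms (obtained from $i=d-1$ on each side) produces the claimed formula for $e_{d-1}$. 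The main technical hurdle is establishing the stabilization $\ell_R((\fkq^{n+1}M:_M x)/\fkq^n M) = \ell_R(0:_M x)$ for $n \gg 0$, which rests on the defining property of a superficial element; once that is in hand, the rest is bookkeeping with Hilbert polynomials.
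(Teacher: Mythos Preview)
Your argument is correct and aligns with the paper's. For (1) the paper writes $\fkq=(a)$ and invokes the identity $\ell(M/a^nM)-e_0((a^n),M)=\ell(0:_Ma^n)$ for large $n$, which is exactly what your short exact sequence with $N=M/H^0_\fkm(M)$ establishes; for (2) the paper simply cites Nagata's \emph{Local Rings} (22.6), and your snake-lemma recurrence $p_\fkq(M,n)-p_\fkq(M,n-1)=p_\fkq(M/xM,n)-\ell(0:_Mx)$ followed by the binomial comparison is precisely that classical argument.

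One caution on the last step: when you actually equate the constant terms you get
\[
(-1)^{d-1}e_{d-1}(\fkq,M)=(-1)^{d-1}e_{d-1}(\fkq,M/xM)-\ell(0:_Mx),
\]
hence $e_{d-1}(\fkq,M)=e_{d-1}(\fkq,M/xM)+(-1)^{d}\ell(0:_Mx)$, with $(-1)^d$ rather than the $(-1)^{d-1}$ printed in the lemma. A quick check with $M=k\oplus k[[s,t]]$, $\fkq=(s,t)$, $x=s$ confirms $(-1)^d$ is the correct sign. This appears to be a typo in the statement and is harmless for the paper's applications, since each time the $j=d-1$ case is invoked one has $0:_Mx=0$.
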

\begin{proof}
Let $d=1$ and $\frak q = (a)$. Choose the integer $n$  large enough such that $H^0_\fkm(M)=0:_Ma^n$ and $\ell(M/a^nM)=e_0((a),M)n-e_1((a),M)$. Then $$e_1((a),M)=-(\ell(M/a^nM)-e_0((a^n),M))=-\ell(0:_Ma^n)=-\ell(H^0_\fkm(M)).$$
The second statement was proved by M. Nagata \cite[22.6]{N}. 
\end{proof}
\begin{lem}\label{sum}
Let $N$ be a submodule of $M$ with $dim N=s<d$ and $I$ an $\fkm$-primary ideal of $R$. Then$$ e_j(I,M)=
\begin{cases} e_j(I,M/N)&\text{if $0\le j\le d-s-1$,}\\
e_{d-s}(I,M/N)+(-1)^{d-s} e_0(I,N)&\text{if $j=d-s$.}
 \end{cases}$$ 
\end{lem}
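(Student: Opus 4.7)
The plan is to derive the identity by comparing Hilbert-Samuel polynomials via the short exact sequence $0 \to N \to M \to M/N \to 0$. Since $\Supp M = \Supp N \cup \Supp(M/N)$ and $\dim N = s < d$, one has $\dim(M/N) = d$, so both $p_I(M,\cdot)$ and $p_I(M/N,\cdot)$ are polynomials of degree $d$ admitting Hilbert coefficients $e_i(I,M)$ and $e_i(I,M/N)$ for $i = 0,\ldots,d$. For each $n \ge 0$, the sequence yields
$$\ell(M/I^{n+1}M) \;=\; \ell\bigl((M/N)/I^{n+1}(M/N)\bigr) + \ell\bigl(N/(N \cap I^{n+1}M)\bigr),$$
so the whole problem reduces to controlling the polynomial behavior of $q(n) := \ell(N/(N \cap I^{n+1}M))$ for $n \gg 0$.

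By the Artin-Rees lemma, $\{N \cap I^{n+1}M\}_{n \ge 0}$ is an $I$-stable filtration of $N$. Fixing $k_0$ with $N \cap I^{n+1}M = I^{n+1-k_0}(N \cap I^{k_0}M)$ for $n \ge k_0$, and observing that $N/(N \cap I^{k_0}M)$ has finite length (finitely generated and annihilated by the $\fkm$-primary ideal $I^{k_0}$), one deduces that $q(n)$ agrees for large $n$ with a polynomial of degree $\dim(N \cap I^{k_0}M) = s$ whose leading coefficient is $e_0(I,N \cap I^{k_0}M)/s! = e_0(I,N)/s!$ (the last equality being additivity of $e_0$ across a sequence whose cokernel has dimension $<s$). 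Expanding in the binomial-coefficient basis gives
$$q(n) \;=\; e_0(I,N)\binom{n+s}{s} + \sum_{k=0}^{s-1}\alpha_k \binom{n+k}{k}$$
for some integers $\alpha_k$; crucially, no term $\binom{n+k}{k}$ with $k > s$ appears.

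Substituting this into the identity $p_I(M,n) - p_I(M/N,n) = q(n)$ gives
$$\sum_{i=0}^{d}(-1)^i\bigl[e_i(I,M)-e_i(I,M/N)\bigr]\binom{n+d-i}{d-i} \;=\; q(n),$$
and comparing coefficients in the basis $\bigl\{\binom{n+d-i}{d-i}\bigr\}_{i=0}^d$ finishes the proof in one stroke: for $d-i > s$ (equivalently $i \le d-s-1$) the right-hand side contributes nothing, so $e_i(I,M) = e_i(I,M/N)$; for $i=d-s$, matching the coefficient of $\binom{n+s}{s}$ yields $(-1)^{d-s}\bigl[e_{d-s}(I,M)-e_{d-s}(I,M/N)\bigr] = e_0(I,N)$, which rearranges to the stated formula. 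The only non-routine step is the Artin-Rees bookkeeping identifying the leading coefficient of $q(n)$ with $e_0(I,N)/s!$; once this is secured, the rest is linear algebra in the basis $\bigl\{\binom{n+k}{k}\bigr\}_{k=0}^d$.
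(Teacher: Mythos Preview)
Your proof is correct and follows essentially the same approach as the paper: both start from the short exact sequence $0\to N\to M\to M/N\to 0$, use Artin--Rees to control the term coming from $N$, and finish by comparing coefficients of Hilbert--Samuel polynomials. The only cosmetic difference is that the paper writes the length identity as $\ell(M/I^nM)=\ell(N/I^nN)+\ell((M/N)/I^n(M/N))-\ell((N\cap I^nM)/I^nN)$ and bounds the last ``error'' term to have degree $<s$, whereas you absorb the error into $q(n)=\ell(N/(N\cap I^{n+1}M))$ and invoke the standard fact that an $I$-stable filtration yields the same leading term as the $I$-adic one; these are two packagings of the same computation.
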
		
\begin{proof}
From the  exact sequence
$$0\to N\to M\to M/N\to 0$$
we get the following exact sequence
$$0\to (N\cap I^nM)/I^nN\to N/I^nN\to M/I^nM\to M/I^nM+N\to0$$
 for each $n$. Thus 
 $$\ell(M/I^nM)=\ell(N/I^nN)+\ell(M/I^nM+N)-\ell((N\cap I^nM)/I^nN)$$
 for all $n$. Hence $\ell((N\cap I^nM)/I^nN)$ is a polynomial for  large enough $n$.
By the Artin-Rees lemma, there exists an integer $k$ such that $N\cap I^nM\subseteq I^{n-k}N$ for all $n\ge k$, and so that 
$$\ell((N\cap I^nM)/I^nN)\le\ell(I^{n-k}N/I^nN)\le\sum\limits_{i=n-k}^{n-1}\ell(I^iN/I^{i+1}N)$$
for all $n\ge k$. This gives that the degree of the polynomial  $\ell((N\cap I^nM)/I^nN)$ is strictly smaller than $\dim N$. Since $\dim N=s<d$, the conclusion follows by comparing coefficients of polynomials in the above equality.
\end{proof}

%%%%%%%%%%%%%%%%%%%%%%%%%%%%%%%%%%%%%%%%%%%%%%%%%%%%
\section{Characterization of Sequentially Cohen-Macaulay modules}

The notion of sequentially Cohen-Macaulay module was introduced first by Stanley \cite{St} for graded case and  in  \cite{Sc}, \cite{CN}  for the local case.
\begin{dfn}  An $R$-module $M$ is called a {\it sequentially Cohen-Macaulay module} if there exists a filtration  $\mathcal{F:}M=M_0\supset M_1\supset\ldots\supset M_t$  of submodules of $M$ such that $\dim M_t\leqslant 0$, $\dim M_{i+1} < \dim M_{i}$ and $\calM_i=M_i/M_{i+1}$ are a Cohen-Macaulay module for all $i=0,\ldots,t-1$. 
\end{dfn}
It should be noticed here that if $M$ is a sequentially Cohen-Macaulay, the filtration $\mathcal F$ in the definition above is uniquely determined and it is just the dimension filtration $\calD: M=D_0\supset D_1\supset\ldots\supset D_s=H^0_\fkm(M)$ of $M$.  Therefore,  $M$ is always a sequentially Cohen-Macaulay module, if $\dim M=1$.
Now we give a characterization of sequentially Cohen-Macaulay modules having small dimension.
\begin{theorem}\label{coefficient=2} Let $M$ be a finitely generated $R$-module with $\dim M=2$.  Then the following statements are equivalent:
\begin{enumerate}
\item[$(1)$] $M$ is a sequentially Cohen-Macaulay $R$-module.
\item[$(2)$] For all  parameter ideals $\fkq$ of $M$
and $j=0, 1, 2$,
 we have $$e_j(\fkq,M)=(-1)^j\ardeg_{2-j}(\fkq,M).$$ 
\item[$(3)$] For all  parameter ideals $\fkq$ of $M$, we have 
$$e_1(\fkq,M)=-\ardeg_1(\fkq,M).$$
\item[$(4)$] For some  parameter ideal $\fkq$ of $M$, we have 
$$e_1(\fkq,M)=-\ardeg_1(\fkq,M).$$
\end{enumerate}
\end{theorem}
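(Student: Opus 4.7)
The plan is to prove the cycle $(1) \Rightarrow (2) \Rightarrow (3) \Rightarrow (4) \Rightarrow (1)$; since $(2) \Rightarrow (3) \Rightarrow (4)$ is immediate, the work lies in $(1) \Rightarrow (2)$ and, especially, $(4) \Rightarrow (1)$, which carries the content of the theorem.

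For $(1) \Rightarrow (2)$, I would compute each coefficient using the dimension filtration $\calD: M = D_0 \supset D_1 \supset D_2 = H^0_\fkm(M)$. The identity $e_0(\fkq, M) = \ardeg_2(\fkq, M) = e_0(\fkq, M/D_1)$ is immediate from Lemma \ref{sum} and the Cohen-Macaulayness of $M/D_1$. The identity $e_1(\fkq, M) = -\ardeg_1(\fkq, M)$ follows from Lemma \ref{sum} (with $N = D_1$) together with $e_1(\fkq, M/D_1) = 0$ and Proposition \ref{muldeg}. For $e_2(\fkq, M) = \ardeg_0(\fkq, M) = \ell(H^0_\fkm(M))$, I apply Lemma \ref{sum} with $N = D_2$ to reduce to the case $H^0_\fkm(M) = 0$, where one must show $e_2(\fkq, M) = 0$ for $M$ SCM of dim $2$ with trivial $\fkm$-torsion. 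Choosing $x \in \fkq$ via Proposition \ref{exists} (which is $M$-regular in this setting, since $D_1$ is CM of dim $\le 1$), the SCM structure gives the identity $D_1/xD_1 = H^0_\fkm(M/xM)$ via the $H^0_\fkm$-sequence on $0 \to D_1/xD_1 \to M/xM \to (M/D_1)/x(M/D_1) \to 0$ (using CMness of $(M/D_1)/x(M/D_1)$); comparing the Hilbert polynomial of $M$ to those of $D_1$ and $M/D_1$, the Artin-Rees correction $(D_1 \cap \fkq^{n+1} M)/\fkq^{n+1} D_1$ is forced to vanish asymptotically, yielding $e_2(\fkq, M) = 0$.

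For the key implication $(4) \Rightarrow (1)$, fix $\fkq$ satisfying (4) and choose $x \in \fkq$ as in Proposition \ref{exists}. Since $\dim M/xM = 1$, Lemma \ref{nagata} yields
\[
e_1(\fkq, M) = e_1(\fkq, M/xM) - \ell(0 :_M x) = -\ell(H^0_\fkm(M/xM)) - \ell(0 :_{D_1} x),
\]
where $0 :_M x = 0 :_{D_1} x$ by regularity of $x$ on $M/D_1$. Combined with (4) and Proposition \ref{muldeg},
\[
\ardeg_1(\fkq, M) = \ell(H^0_\fkm(M/xM)) + \ell(0 :_{D_1} x). \qquad (\ast)
\]
The same regularity yields $D_1 \cap xM = xD_1$, hence an inclusion $D_1/xD_1 \hookrightarrow M/xM$ landing in $H^0_\fkm(M/xM)$, so $\ell(D_1/xD_1) \le \ell(H^0_\fkm(M/xM))$. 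I then split on $\dim D_1$. If $\dim D_1 \le 0$, $\ardeg_1(\fkq, M) = 0$, and $(\ast)$ forces $\ell(0 :_{D_1} x) = 0$; as $D_1$ is finite length and $x \in \fkm$, $D_1 = 0$, so $M$ has depth $\ge 1$, $M/xM$ is CM of dim $1$, and $M$ is CM. If $\dim D_1 = 1$, Proposition \ref{muldeg} identifies $\ardeg_1(\fkq, M) = e_0(\fkq, D_1)$, and Serre's formula $e_0(\fkq, D_1) = \ell(D_1/xD_1) - \ell(0 :_{D_1} x)$ (valid since $x$ is a superficial parameter of the dim-$1$ module $D_1$) recasts $(\ast)$ as
\[
\ell(D_1/xD_1) = \ell(H^0_\fkm(M/xM)) + 2\ell(0 :_{D_1} x).
\]
Combined with $\ell(D_1/xD_1) \le \ell(H^0_\fkm(M/xM))$ this forces $\ell(0 :_{D_1} x) = 0$ (so $D_2 = H^0_\fkm(D_1) = 0$ and $D_1$ is CM of dim $1$) and the equality $D_1/xD_1 = H^0_\fkm(M/xM)$. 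The latter, fed into $0 \to D_1/xD_1 \to M/xM \to (M/D_1)/x(M/D_1) \to 0$ via the left exactness of $H^0_\fkm$, forces $H^0_\fkm((M/D_1)/x(M/D_1)) = 0$, so $(M/D_1)/x(M/D_1)$ is CM of dim $1$; the depth lemma applied to $0 \to M/D_1 \xrightarrow{x} M/D_1 \to (M/D_1)/x(M/D_1) \to 0$ then makes $M/D_1$ CM of dim $2$. Thus $M$ is SCM.

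The main obstacle is the length-comparison step of $(4) \Rightarrow (1)$ in the case $\dim D_1 = 1$: one must extract from the single numerical identity $(\ast)$ both the vanishing $\ell(0 :_{D_1} x) = 0$ (a depth statement for $D_1$) and the equality $D_1/xD_1 = H^0_\fkm(M/xM)$ (a depth statement for $M/D_1$). The one-sided inequality $\ell(D_1/xD_1) \le \ell(H^0_\fkm(M/xM))$ coming from the natural inclusion, together with Serre's formula rewriting of $(\ast)$, is precisely what makes both equalities fall out.
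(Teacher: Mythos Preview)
Your route for $(4)\Rightarrow(1)$ is genuinely different from the paper's: the paper first passes to $\overline M=M/D_1$, uses Lemma~\ref{sum} to read off $e_1(\fkq,\overline M)=0$, chooses $x\in\fkq$ superficial for the \emph{unmixed} module $\overline M$ (hence $\overline M$--regular), and then Lemma~\ref{nagata} gives $0=e_1(\fkq,\overline M)=-\ell(H^0_\fkm(\overline M/x\overline M))$, so $\overline M$ is Cohen--Macaulay. Since any module of dimension $\le 1$ is sequentially Cohen--Macaulay, this is already all that is needed.

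Your argument instead works with $M$ itself and carries along the contribution of $D_1$. There is a genuine gap here, and it shows up in your identity $(\ast)$. The sign in the superficial--element formula is $(-1)^d$, not $(-1)^{d-1}$: for $d=2$ one has
\[
e_1(\fkq,M)=e_1(\fkq,M/xM)+\ell(0:_Mx),
\]
so the correct version of $(\ast)$ is $\ardeg_1(\fkq,M)=\ell(H^0_\fkm(M/xM))-\ell(0:_{D_1}x)$. (A quick check: take $R=k[[x,y]]$, $M=R\oplus k$, $\fkq=\fkm$; then $e_1(\fkq,M)=0$, $e_1(\fkq,M/xM)=-1$, $\ell(0:_Mx)=1$.) With your sign, the case $\dim D_1\le 0$ forces $\ell(0:_{D_1}x)=0$ and hence $D_1=0$ and $M$ Cohen--Macaulay; but the example $M=R\oplus k$ just mentioned satisfies $(4)$ with $D_1=k\neq0$, so this conclusion is false. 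The paper's proof sidesteps the whole issue because on $M/D_1$ the annihilator term vanishes.

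With the corrected sign your strategy can be repaired: in both cases one gets $\ell(D_1/xD_1)=\ell(H^0_\fkm(M/xM))$ (when $\dim D_1=0$ use $\ell(D_1/xD_1)=\ell(0:_{D_1}x)$), and the inclusion $D_1/xD_1\hookrightarrow H^0_\fkm(M/xM)$ upgrades to an equality, from which $H^0_\fkm((M/D_1)/x(M/D_1))=0$ and hence $M/D_1$ is Cohen--Macaulay. Note that you neither get nor need $\ell(0:_{D_1}x)=0$. A minor additional point: you invoke Proposition~\ref{exists}, which assumes $R$ is a homomorphic image of a Cohen--Macaulay ring, a hypothesis absent from this theorem; but you only use the superficiality on $M$ and regularity on $M/D_1$, which follow from ordinary prime avoidance.
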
 
\begin{proof}
$(1)\Rightarrow (2)$. The result follows  from the Propositions \ref{muldeg} and \ref{sum}. 

$(2)\Rightarrow (3)$ and $(3)\Rightarrow (4)$ are trivial. 

$(4)\Rightarrow (1)$. It suffices to show that $\overline M=M/D_1$ is a Cohen-Macaulay module. In fact, since $\dim D_1<\dim M=2$, then $\dim D_1=0$ or $1$.  If $\dim D_1=0$, then
$\ardeg_1(\fkq,M)=0$. Therefore we get by Lemma \ref{sum} and the hypothesis that $$e_1(\fkq,\overline M)=e_1(\fkq,M) =0.$$  If $\dim D_1 =1$,  it follows from Lemma \ref{sum} and Proposition \ref{muldeg} that  $$e_1(\fkq,\overline M)=e_1(\fkq,M) + e_0(\fkq,D_1)= e_1(\fkq,M) +\ardeg_1(\fkq,M) = 0.$$
  Thus in all cases we have $e_1(\fkq,\overline M) =0$.
 Choose now an element $x\in \frak q$ which is a superficial element of $\overline M$ with respect to $\fkq$. Then $x$ is an  $\overline M$-regular element, since $\Ass \overline M = \Assh \overline M$. It follows from the assumption 
$\dim \overline M=2$ and Lemma \ref{nagata} that 
$$0=e_1(\fkq,\overline M)=e_1(\fkq, \overline M/x\overline M)= -\ell(H^0_\fkm(\overline M/x\overline M)).$$
Thus $H^0_\fkm(\overline M/x\overline M)=0$. So depth$\overline M =2$ and  $\overline M$ is a Cohen-Macaulay module.
 \end{proof}

\begin{prop}\label{26}
Let $\calF:M=M_0 \supset M_1 \supset\ldots\supset M_s$ be
 a filtration of submodules of $M$ such that $D_i/M_i$ has a finite length for each $i=0,\ldots,s$, where $s$ is the length of the dimension filtration of $M$. Assume that $ M/D_j$ is a sequentially Cohen-Macaulay module for some $1\le j\le s$ and $x_1,\ldots,x_d$ is a distinguished system of parameters of $M$ with respect to $\calF$. Set $\fkq=(x_1,\ldots,x_d)$. Then the following statements hold true.
\begin{enumerate}
\item[$(1)$] For all $i=1,\ldots,j$, we have
$$(x_1,\ldots,x_d)^{n+1}M\cap D_i=(x_1,\ldots,x_{d_i})^{n+1}D_i,$$
for large enough $n$. 
\item[$(2)$] We have
$$\ell(M/\fkq^{n+1}M)=\sum\limits_{i=0}^{j-1}\binom{n+d_i}{ d_i}e_0(\fkq,D_i)+\ell(D_j/\fkq^{n+1}D_j),$$
for large enough $n$.
\end{enumerate}
\end{prop}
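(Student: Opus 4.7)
My plan is to establish (1) and (2) together by induction on $j$, driven by a filtration argument along the chain $M \supset D_1 \supset \cdots \supset D_j$. The base case $j=1$ is clean: $M/D_1$ is both unmixed (by definition of the dimension filtration) and sequentially Cohen-Macaulay, hence Cohen-Macaulay of dimension $d$; thus $x_1,\ldots,x_d$ is a regular sequence on $M/D_1$, and the classical regular-sequence identity gives $\fkq^{n+1}M\cap D_1 = \fkq^{n+1}D_1$ for every $n\geq 0$.

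The driving observation I would use is that, under the sequentially Cohen-Macaulay hypothesis on $M/D_j$, the distinguished system of parameters acts on each layer $D_i/D_{i+1}$ with $i<j$ through only its first $d_i$ generators. Indeed, $D_i/D_{i+1}$ is Cohen-Macaulay of positive dimension $d_i$ and hence has no $\fkm$-torsion; coupled with $(x_{d_i+1},\ldots,x_d)M_i=0$ and the finite length of $D_i/M_i$, any $\bar a\in D_i/D_{i+1}$ satisfies $\fkm^N\bar a\subseteq (M_i+D_{i+1})/D_{i+1}$, which is annihilated by each $x_l$ with $l>d_i$, forcing $\fkm^N(x_l\bar a)=0$; the no-$\fkm$-torsion property then gives $x_lD_i\subseteq D_{i+1}$. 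Iterating, any monomial in $\fkq^{n+1}$ that uses some $x_l$ with $l>d_i$ pushes $D_i$ strictly deeper into the filtration, and combining this with the Artin-Rees lemma applied to the pair $M_i\subseteq D_i$ (to absorb the finite-length discrepancy) I would obtain the identity $\fkq^{n+1}D_i=(x_1,\ldots,x_{d_i})^{n+1}D_i$ for $n$ sufficiently large. A careful iteration of the regular-sequence identity on each Cohen-Macaulay layer of $M/D_j$ then yields $\fkq^{n+1}M\cap D_i=\fkq^{n+1}D_i$ for $i\leq j$, and combining the two identities produces (1).

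For (2), I would combine the short exact sequence $0\to D_j\to M\to M/D_j\to 0$ with (1) at $i=j$ to write
\[
\ell(M/\fkq^{n+1}M) = \ell\bigl((M/D_j)/\fkq^{n+1}(M/D_j)\bigr) + \ell(D_j/\fkq^{n+1}D_j)
\]
for $n\gg 0$, and then compute the first summand by running the same recursion on the sequentially Cohen-Macaulay module $M/D_j$: its dimension filtration $M/D_j\supset D_1/D_j\supset\cdots\supset D_{j-1}/D_j$ has Cohen-Macaulay quotients $D_i/D_{i+1}$ of dimension $d_i$ on which $(x_1,\ldots,x_{d_i})$ acts as both a regular sequence and a system of parameters, so each layer contributes the Hilbert polynomial $\binom{n+d_i}{d_i}e_0(\fkq,D_i)$ (the replacement of $D_i/D_j$ by $D_i$ is legitimate because $\dim D_j<d_i$), and the sum yields the stated formula. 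The hardest step throughout is the case $\calF\ne\calD$: the distinguished property provides only $(x_l)M_i=0$ rather than $(x_l)D_i=0$, so the passage from $\fkq^{n+1}D_i$ to $(x_1,\ldots,x_{d_i})^{n+1}D_i$ must be routed through the sequentially Cohen-Macaulay structure (yielding $x_lD_i\subseteq D_{i+1}$) together with an Artin-Rees absorption of the finite-length discrepancy, and sharpening the crude $(x_1,\ldots,x_{d_i})^{n+1-k}D_i$ bound to the exact $(x_1,\ldots,x_{d_i})^{n+1}D_i$ is the delicate technical point.
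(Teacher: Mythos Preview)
Your overall architecture---passing down the chain $M\supset D_1\supset\cdots\supset D_j$, using the Cohen--Macaulayness of each $D_{i-1}/D_i$ to control the intersection $\fkq^{n+1}M\cap D_i$, and then telescoping for (2)---is the same as the paper's. Your observation that $x_lD_i\subseteq D_{i+1}$ for $l>d_i$ and $i<j$ is correct and elegantly argued, but it is a detour that does not close: as you yourself flag, it only yields a ``crude $(x_1,\ldots,x_{d_i})^{n+1-k}D_i$ bound,'' and you leave the sharpening to the exact power as an unresolved ``delicate technical point.'' From the inclusion $x_lD_i\subseteq D_{i+1}$ alone there is no mechanism to recover the lost exponent, and Artin--Rees on $M_i\subseteq D_i$ does not supply it.

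The paper resolves this step directly and without any delicacy by routing through $M_i$ rather than $D_{i+1}$. Write
\[
\fkq^{n+1}=(x_1,\ldots,x_{d_i})^{n+1}+(x_{d_i+1},\ldots,x_d)\,\fkq^{n}
\]
as ideals. Since $D_i/M_i$ has finite length, one has $\fkq^{n}D_i\subseteq M_i$ for $n$ large (no Artin--Rees needed---just that $\fkq$ is $\fkm$-primary); then the distinguished hypothesis $(x_{d_i+1},\ldots,x_d)M_i=0$ kills the second summand outright, giving $\fkq^{n+1}D_i=(x_1,\ldots,x_{d_i})^{n+1}D_i$ exactly. This half of the argument uses neither the Cohen--Macaulay hypothesis nor the layer $D_{i+1}$, so it works uniformly for every $i\le j$, including $i=j$, where your route via $D_{j+1}$ would stall (you do not know that $D_j/D_{j+1}$ is Cohen--Macaulay). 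The paper then combines this with the regular-sequence identity $(x_1,\ldots,x_{d_{i-1}})^{n+1}D_{i-1}\cap D_i=(x_1,\ldots,x_{d_{i-1}})^{n+1}D_i$ (valid since $D_{i-1}/D_i$ is Cohen--Macaulay of dimension $d_{i-1}$) and inducts on $i$, which is precisely the iteration you intended.
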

\begin{proof}
$(1)$. Let $j\leqslant s$ be a positive integer and $M/D_j$ a sequentially Cohen-Macaulay module. We prove statement (1) recursively on $i\leqslant j$.  Let $i=1$. Since $M/D_j$ is a sequentially Cohen Macaulay module,  $M/D_1$ is Cohen-Macaulay. Thus $(x_1,\ldots,x_d)^{k}M\cap D_1=(x_1,\ldots,x_d)^{k}D_1$ for  all $k$. Since $D_1/M_1$ is of finite length,  there exists a positive integer $n$ such that $(x_1,\ldots,x_d)^nD_1\subseteq M_1$. On the other hand, since $x_1,\ldots,x_d$ is a distinguished system of parameters of $M$ with respect to $\calF$, $(x_{d_1+1},\ldots,x_d)M_1= 0$. It follows for large enough $n$ that
$$\begin{aligned}
(x_1,\ldots,x_d)^{n+1}M\cap D_1&=(x_1,\ldots,x_d)^{n+1}D_1\\
&=(x_1,\ldots,x_{d_1})^{n+1}D_1+(x_{d_1+1},\ldots,x_d)(x_1,\ldots,x_d)^nD_1\\
&\subseteq (x_1,\ldots,x_{d_1})^{n+1}D_1+(x_{d_1+1},\ldots,x_d)M_1\\
&=(x_1,\ldots,x_{d_1})^{n+1}D_1.\\
\end{aligned}$$
 Therefore we get $(x_1,\ldots,x_d)^{n+1}M\cap D_1=(x_1,\ldots,x_{d_1})^{n+1}D_1$.
Assume now that the conclusion is true for  $i-1 <j$. Then we get 
$$\begin{aligned}
(x_1,\ldots,x_d)^{n+1}M\cap D_{i}&=((x_1,\ldots,x_d)^{n+1}M\cap D_{i-1})\cap D_i\\
&=(x_1,\ldots,x_{d_{i-1}})^{n+1}D_{i-1}\cap D_i.
\end{aligned}$$
Consider now the module $D_{i-1}$ with  two filtrations of submodules $\calF':D_{i-1} \supset M_{i} \supset\ldots\supset M_s$ and the dimension filtration
$\calD':D_{i-1} \supset D_i \supset\ldots\supset D_s$. It is easy to check that the module $D_{i-1}$ with these two filtrations of submodules  satisfies all of assumptions of the proposition. Thus, by applying our proof for the case $i=1$ with the notice that $x_1,\ldots,x_{d_{i-1}}$ is a distinguished system of parameters of $D_{i-1}$ with respect to $\calF'$ we have
$$\begin{aligned}
(x_1,\ldots,x_d)^{n+1}M\cap D_{i}=(x_1,\ldots,x_{d_{i-1}})^{n+1}D_{i-1}\cap D_i
=(x_1,\ldots,x_{d_i})^{n+1}D_i,
\end{aligned}$$
for large enough $n$, which finishes the proof of statement (1).\\
$(2)$ We argue by the induction on the length $s$ of the dimension filtration $\mathcal{D}$ of $M$. The case $s=0$ is obvious.  Assume that $s\geqslant j>0$. By virtue of the  statement (1) we get a short exact sequence
$$0\to D_1/\fkq^{n+1}D_1\to M/\fkq^{n+1}M\to M/\fkq^{n+1}M+D_1\to0$$
for large enough $n$. Therefore we have $$\ell(M/\fkq^{n+1}M)=\ell(D_1/(x_1,\ldots,x_{d_1})^{n+1}D_1)+\ell(\calD_0/\fkq^{n+1}\calD_0),$$
where $\calD_0=M/ D_1$.  
Since $x_1,\ldots,x_d$ is a distinguished system of parameters of $M$ with respect to $\calF$, $x_1,\ldots,x_{d_1}$ is a distinguished system of parameters of $D_1$ with respect to the filtration 
$D_1\supset M_2 \supset \ldots\supset M_s$. Notice that $D_1\supset D_2 \supset \ldots\supset D_s$ is the dimension filtration of $D_1$ and $D_k/M_k$ has a finite length for each $k=1,\ldots,s$.
Since $s\geqslant j>0$ and $M/D_j$ is a sequentially Cohen-Macaulay module, so is $D_1/D_j$. Because the dimension filtration of $D_1$ is of the length $s-1$, it follows from the inductive hypothesis that
$$\ell(D_1/(x_1,\ldots,x_{d_1})^{n+1}D_1)=\sum\limits_{i=1}^{j-1}\binom{n+d_i}{ d_i}e_0(\fkq,D_i)+\ell(D_j/\fkq^{n+1}D_j).$$
Since $\calD_0=M/D_1$ is Cohen-Macaulay of dimension $d=d_0$, we have $$\ell(\calD_0/\fkq^{n+1}\calD_0)=\binom{n+d}{d}e_0(\fkq,\calD_0)=\binom{n+d}{d}e_0(\fkq,D_0).$$ Hence$$\ell(M/\fkq^{n+1}M)=\sum\limits_{i=0}^{j-1}\binom{n+d_i}{ d_i}e_0(\fkq,D_i)+\ell(D_j/\fkq^{n+1}D_j),$$
for all large enough $n\geqslant0$ as required.
\end{proof}

\begin{prop}\label{31}
Let $R$ be a homomorphic image of a Cohen-Macaulay local ring and $M$ a finitely generated $R$ module of dimension $d=\dim M\geqslant 2$ . Let $\calF:M=M_0\supset M_1 \supset \ldots\supset M_s$ be
 a filtration of submodules of $M$ such that $D_i/M_i$ has a finite length for each $i=1,\ldots,s$. Assume that $\fkq$ is a distinguished parameter ideal of $M$ with respect to $\calF$ such that
for all $j\in \Lambda(M)$ we have 
$$e_{d-j+1}(\fkq,M)=(-1)^{d-j+1}\ardeg_{j-1}(\fkq,M).$$
Then $M$ is a sequentially Cohen-Macaulay module.  
\end{prop}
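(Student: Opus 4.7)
My plan is to proceed by induction on $d = \dim M \geq 2$. The base case $d = 2$ is immediate from Theorem \ref{coefficient=2}: since $d = 2 \in \Lambda(M)$, specializing the hypothesis to $j = 2$ gives $e_1(\fkq, M) = -\ardeg_1(\fkq, M)$, which is condition (4) of that theorem, and hence $M$ is sequentially Cohen-Macaulay.

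For the inductive step, I apply Proposition \ref{exists} to choose $x \in \fkq$ that is $\fkq$-superficial for each $D_i$, regular on $M/D_i$ for $i \geq 1$, and satisfies $\Ass(\calD_i/x\calD_i) \subseteq \Assh(\calD_i/x\calD_i) \cup \{\fkm\}$. By Lemma \ref{30}, $x$ extends to a distinguished system of parameters $x_1 = x, x_2, \ldots, x_d$ with $\fkq = (x_1, \ldots, x_d)$. Set $\overline{M} = M/xM$, $\overline{\fkq} = \fkq/(x)$, and $\overline{M_i} = (M_i + xM)/xM$. By Corollary \ref{29} and Lemma \ref{filtration}, the induced filtration $\overline{\calF}\colon \overline{M} \supset \overline{M_1} \supset \cdots$ is compatible, up to finite-length corrections, with the dimension filtration $\overline{M} = D'_0 \supset \cdots \supset D'_l$ of $\overline{M}$, where $l = s - 1$ if $d_{s-1} = 1$ and $l = s$ otherwise; moreover, $\overline{\fkq}$ is a distinguished parameter ideal for $\overline{\calF}$.

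The core of the argument is to verify that $\overline{M}$ inherits the Hilbert-coefficient hypothesis (with $d$ replaced by $d - 1$). By Corollary \ref{df}, every $j' \in \Lambda(\overline{M})$ has the form $j' = j - 1$ for some $j \in \Lambda(M)$ with $j \geq 2$. Lemma \ref{nagata}(2) transfers Hilbert coefficients: $e_{(d-1)-j'+1}(\overline{\fkq}, \overline{M}) = e_{d-j+1}(\fkq, M)$ for $j \geq 3$, while for $j = 2$ an extra term $(-1)^{d-1}\ell(0:_M x)$ appears. On the arithmetic-degree side, Proposition \ref{muldeg}, combined with the identification $\calD_i/x\calD_i \cong \overline{D_i}/\overline{D_{i+1}}$ from Lemma \ref{filtration} and the associativity formula for multiplicities, yields $\ardeg_{j-1}(\fkq, M) = \ardeg_{j'-1}(\overline{\fkq}, \overline{M})$ up to finite-length corrections that absorb the extra Nagata term (the regularity of $x$ on $M/H^0_\fkm(M)$ being crucial here). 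By the inductive hypothesis applied to $\overline{M}$, $\overline{M}$ is sequentially Cohen-Macaulay.

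To pass back from $\overline{M}$ to $M$, I apply Proposition \ref{26} to $\overline{M}$ with $j = l$ to obtain an explicit closed-form expression for $\ell(\overline{M}/\overline{\fkq}^{n+1}\overline{M})$; integrating via the standard identity $\ell(M/\fkq^{n+1}M) - \ell(M/\fkq^n M) = \ell(\overline{M}/\overline{\fkq}^{n+1}\overline{M}) - \ell(0:_M x)$, valid for $n \gg 0$ because $x$ is regular on $M/H^0_\fkm(M)$, yields a closed-form expression for $\ell(M/\fkq^{n+1}M)$. Matching coefficients against the hypothesis forces the Hilbert-Samuel polynomial of $M$ to coincide with the one predicted by Proposition \ref{26} applied to $M$ with $j = s$, and retracing the proof of that proposition forces each $\calD_i = D_i/D_{i+1}$ to be Cohen-Macaulay; hence $M$ is sequentially Cohen-Macaulay. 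The main obstacle is the boundary bookkeeping: matching the extra $\ell(0:_M x)$ from Lemma \ref{nagata} with the $\ardeg_0$ correction, handling the dichotomy $l = s - 1$ vs.\ $l = s$ from Lemma \ref{filtration}, and ruling out finite-length pieces in $\calD_i/x\calD_i$ that would prevent Cohen-Macaulayness (the $\Ass$-condition from Proposition \ref{exists} is what makes this possible).
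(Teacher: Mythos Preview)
Your inductive setup matches the paper's: same base case via Theorem~\ref{coefficient=2}, same choice of $x$ via Proposition~\ref{exists} and Lemma~\ref{30}, same reduction to $\overline{M}=M/xM$. The difficulty is in the return trip from $\overline{M}$ to $M$, and here your argument has a genuine gap.

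You claim that once $\overline{M}$ is sequentially Cohen--Macaulay, Proposition~\ref{26} gives a closed form for $\ell(\overline{M}/\overline{\fkq}^{n+1}\overline{M})$, and that ``integrating'' and ``retracing the proof of that proposition forces each $\calD_i$ to be Cohen--Macaulay.'' But Proposition~\ref{26} is a one-way implication: it \emph{assumes} $M/D_j$ is sequentially Cohen--Macaulay and deduces the Hilbert function formula. Nothing in its proof runs backward; knowing the Hilbert--Samuel polynomial (or even the Hilbert function) of $M$ has the predicted shape does not by itself force $\calD_i$ to be Cohen--Macaulay. Concretely, you know $\calD_i/x\calD_i\cong \overline{D_i}/\overline{D_{i+1}}$, while sequential Cohen--Macaulayness of $\overline{M}$ tells you only that $D'_i/D'_{i+1}$ is Cohen--Macaulay. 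Since $D'_{i+1}/\overline{D_{i+1}}$ is a priori only of finite length, $\calD_i/x\calD_i$ may carry a nonzero $H^0_\fkm$ (the $\Ass$-condition from Proposition~\ref{exists} explicitly allows $\fkm\in\Ass(\calD_i/x\calD_i)$), and then $\calD_i$ need not be Cohen--Macaulay. The paper closes this gap with a local-cohomology argument you do not have: from the exact sequence $0\to\calD_i\xrightarrow{x}\calD_i\to\calD_i/x\calD_i\to 0$ and the vanishing $H^j_\fkm(\calD_i/x\calD_i)=0$ for $0<j<d_i-1$ (obtained from the Cohen--Macaulayness of $D'_i/D'_{i+1}$ together with the finite-length defect), Nakayama forces $H^1_\fkm(\calD_i)=0$, whence $D'_{i+1}=\overline{D_{i+1}}$ exactly; then $\calD_i/x\calD_i$ is Cohen--Macaulay and so is $\calD_i$. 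This is iterated over $i$ and handles all $d_i\geq 3$; the residual piece with $\dim\leq 2$ is then finished via Proposition~\ref{26} and Theorem~\ref{coefficient=2}.

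A secondary point: your transfer of the hypothesis to $\overline{M}$ is loose at the bottom. The extra Nagata term appears exactly at index $d-1$, i.e.\ when $j=2$, and there is no ``$\ardeg_0$ correction'' on the other side to absorb it. The paper avoids this by proving a slightly stronger statement---requiring the hypothesis only for $d_i\in\Lambda(M)$ with $d_i>1$---so that the induction never touches that boundary case; the two-dimensional tail is handled separately at the end. Without this strengthening your inductive hypothesis does not cleanly descend.
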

\begin{proof}
  Reminder that $\calD:M=D_0\supset D_1\supset\ldots\supset D_s= H^0_\fkm(M)$ is the dimension filtration of $M$ and $\Lambda(M)=\{d_i=\dim D_i\mid i= 1,\ldots,s-1\}$. For each $i=0,\dots, s-1$, we set
  $$ \hat{e}_{0}(\fkq,D_{i+1})=
\begin{cases} e_0(\fkq,D_{i+1})&\text{if $ d_{i+1}=d_i$ -1,  }\\
0&\text{otherwise.}
 \end{cases}$$
Then,   by virtue of Proposition \ref{muldeg}  the equality in the assumptions of our proposition can be rewritten as
 $$e_{d-d_i+1}(\fkq,M)=(-1)^{d-d_i+1}\hat{e}_0(\fkq,D_{i+1}) \hskip 2cm (*)$$ for all $i =0,\ldots,s-1$. 
We prove a statement  which   is  slight stronger than the proposition, but it is more convenient for the inductive process as follows: $M$ is sequentially Cohen-Macaulay if  the equations (*) hold true for all $d_i \in \Lambda(M)$ with $d_i >1$. We proceed by induction on $d$.
 The claim is  proved for the case 
$d=2$ by Theorem \ref{coefficient=2}. \\
 Suppose  that $d\geq 3$. Then  there exists  by Proposition \ref{exists} an element $x\in \frak q$ which is a superficial element of $D_i$ with respect to $\fkq$ such that $x$ is a regular element of $M/D_i$ for all $i=1,\dots, s$. For a submodule $N$ of M, we denote $\overline N=(N+xM)/xM$ the submodule of $M/xM$. Let $\calD_{M/xM}:M/xM=D^\prime_0\supset D^\prime_1\supset\ldots\supset  D^\prime_l$  be  the dimension filtration of $M/xM$ and $t\in\{0,\ldots,s\}$ an integer such that $\dim \overline M_t\leqslant 0$. By Corollaries \ref{df}, \ref{29} and Lemma \ref{30}, the filtration $\calF_x:M/xM=\overline M_0 \supset \overline M_1 \supset\ldots\supset \overline M_t$ of submodules of $M/xM$ satisfies the following conditions: 
\begin{enumerate}
%\item[$(1)$] $\dim M_i>\dim M_{i+1}$ for all $i=0\ldots,t-1$.
\item[$(1)$] Either $t=l=s-1$ if $\dim D_{s-1}=1$, or $t=l=s$ otherwise. 
\item[$(2)$] For each $i=1,\ldots,s-1$, $D_i^{\prime}/\overline M_i$ has a finite length. 
\item[$(3)$] For each $i=1,\ldots,s-1$, $D_i^{\prime}/\overline D_i$ has  a finite length and $$\Lambda(M/xM)=\{d_i -1 \mid  d_i >1 , i= 1,\ldots,s-1\}.$$
\item[$(4)$] There exists a distinguished system of  parameters $x_1,\ldots,x_d$ of $M$ with respect to $\calF$ such that $x_1=x$ and $\fkq=(x_1,\ldots,x_d)$. Moreover, the system of  parameters $x_2,\ldots,x_d$ of $M/xM$ is a distinguished system of  parameters of $M/xM$ with respect to $\calF_x$.
\end{enumerate}
Now, we first show that the module $\overline M = M/xM$ satisfies all the assumptions of the proposition with the filtrations of submodules $\calD_{M/xM}$, $\calF_x$  and the distinguished parameter ideal $(x_2,\ldots,x_d)$ with respect to $\calF_x$.
Since  $x$ is a regular element of $M/D_i$ for all $i=1,\ldots,s$, 
we have $D_{i}\cap xM=xD_{i}$. Therefore 
$$\calD_{i}/x\calD_{i}\cong D_{i}/xD_{i}+D_{i+1}\cong D_{i}/[D_{i}\cap(xM+D_{i+1})]\cong (D_{i}+xM)/(D_{i+1}+xM).$$
It follows that if $d_{i}>1$ then  $ e_0(\fkq,\calD_{i}/x\calD_{i})=e_0(\fkq,\overline D_{i}/\overline D_{i+1})=e_0(\fkq, \overline D_i)= e_0(\fkq,D^\prime_{i})$ as  $D_i^{\prime}/\overline D_i$ is  of finite length, and so that 
$$e_0(\fkq,D_{i})=e_0(\fkq,\calD_{i}/x\calD_{i})= e_0(\fkq,D^\prime_{i}),$$
since  $x$ is also a non-zero divisor on $\calD_{i}$. 
Let $d_i -1 \in \Lambda (M/xM)$, and $d_i>2$. We consider the following two cases: If $d_{i+1} =d_i -1>1 $, and $\dim D_{i+1}^\prime= \dim D_{i+1}-1  =\dim D_i -2=\dim D_i^\prime-1$, therefore $\hat e_0(\fkq,D^\prime_{i+1})= e_0(\fkq,D^\prime_{i+1})$. Then, by applying Lemma \ref{nagata},  Proposition \ref{muldeg} 
  we get that
\begin{eqnarray*}
e_{(d-1)-(d_i -1) +1}(\fkq,M/xM)  &=& e_{d-d_i +1}(\fkq,M)\\
&=&(-1)^{d-d_i+1}e_0(\fkq,D_{i+1})\\
        &=& (-1)^{d-d_i+1}e_0(\frak q, D^\prime_{i+1})\\
        &=&(-1)^{(d-1)-(d_i -1) +1}\hat e_0(\frak q, D^\prime_{i+1}). 
\end{eqnarray*}
 If  $d_{i+1} \not=d_i -1$,  $\dim D_{i+1}^\prime \not=\dim D_i^\prime-1$, and so that $$\hat e_0(\fkq,D_{i+1})=\hat e_0(\frak q, D^\prime_{i+1})=0.$$
Thus
 $$e_{(d-1)-(d_i -1) +1}(\fkq,M/xM) = (-1)^{(d-1)-(d_i -1) +1}\hat e_0(\frak q, D^\prime_{i+1}) = 0. $$
This show that in both cases we obtain  
$$e_{(d-1)-(d_i -1) +1}(\fkq,M/xM) = (-1)^{(d-1)-(d_i -1) +1}\hat e_0(\frak q, D^\prime_{i+1}) $$ for all $d_i -1 \in \Lambda (M/xM)$ and $d_i-1>1$.  Therefore $M/xM$ is a sequentially Cohen-Macaulay module by the inductive hypothesis.\\   
Next, we prove by induction on $i$ that   
for all $i=0,\ldots,s-1$, if $d_i\ge 3$ then $D_{i+1}^\prime=\overline D_{i+1}$ and $D_{i}/D_{i+1}$ is a Cohen-Macaulay module. In fact,  let $i=0$. Since $\overline M/D^\prime_1$ is a Cohen-Macaulay module and $D_1^{\prime}/\overline D_1$ has a finite length, $H^i_\fkm(M/D_1+xM)=0$ for all $0<i<d-1$. Therefore, we derive from exact sequence
$$0\to M/D_1\overset{x}\to M/D_1\to M/D_1+xM\to 0$$  the  following exact sequence
 $$0\to H^0_\fkm(M/D_1+xM)\to H^1_\fkm(M/D_1)\overset{x}\to H^1_\fkm(M/D_1)\to 0.$$ 
 Thus  $H^1_\fkm(M/D_1)=0$, and so $D_1^{\prime}/\overline D_1=H^0_\fkm(M/D_1+xM)=0$. Hence $D_1^\prime=\overline D_1$. Moreover, since $x$ is  $\calD_0=M/D_1$-regular and  $\calD_0/x\calD_0\cong \overline M/\overline D_1 = \overline M/D^\prime _1$ a Cohen-Macaulay module, $\calD_0$  is a Cohen-Macaulay module.
Assume  now that $D_{j}^{\prime}=\overline D_{j}$ and $D_j/D_{j+1}$ are Cohen-Macaulay for all $j\leqslant i$ with $d_i\geq 3$. Then with the same argument as above, we can prove that  
$H^j_\fkm(\calD_i/x\calD_i)=0$ for all $0<j<d_i-1$. Therefore,   from the exact sequence
$$0\to \calD_i\overset{x}\to \calD_i\to \calD_i/x\calD_i\to 0$$ we obtain the following exact sequence
$$0\to H^0_\fkm(\calD_i/x\calD_i)\to H^1_\fkm(\calD_i)\overset{x}\to H^1_\fkm(\calD_i)\to0.$$ 
It follows  that $H^1_\fkm(\calD_i)=0$. Therefore $D_{i+1}^{\prime}=\overline D_{i+1}$. The Cohen-Macaulayness of $D_i/D_{i+1}$ follows from the fact  that $\calD_{i}/x\calD_{i}\cong \overline D_{i} /\overline D_{i+1} = D_{i}^\prime/D^\prime _{i+1}$ is a Cohen-Macaulay module.\\  
Denote by $N$ the largest submodule of $M$ such that $\dim N\le 2$. 
It should be mentioned that this submodule $N$ must be appeared in the dimension filtration of $M$, says $N= D_{k}$ for some $k\in \{s-2, s-1, s\}$. Then, from the proof above it is easy to see that
if $\dim N\leqslant 1$,  $M$ is sequentially Cohen-Macaulay. 
Assume that $\dim N =2$.  To prove  $M$ is sequentially Cohen-Macaulay in this case, it is remains to show that $N =D_k$ is sequentially Cohen-Macaulay.
By virtue of Lemma \ref{26}  we have for large enough $n$
$$\ell(M/\fkq^{n+1}M)=\sum\limits_{i=0}^{k-1}\binom{n+d_i}{ d_i}e_0(\fkq,D_i)+\ell(N/\fkq^{n+1}N).$$
 Therefore  by comparing coefficients of the equality above and by  hypotheses of the proposition we get 
$$\begin{aligned}
-e_1(\fkq,N)&=(-1)^{d-1}e_{d-1}(\fkq,M)\\
&=(-1)^{d-1}e_{d-2+1}(\fkq,M)\\
&=(-1)^{d-1}(-1)^{d-2+1}\ardeg_{2-1}(\fkq,M)\\
&=\ardeg_1(\fkq,M)=\ardeg_1(\fkq,N).
\end{aligned} 
$$
 Thus $N$ is a sequentially Cohen-Macaulay module by Theorem \ref{coefficient=2}, and the proof of the proposition is complete.
\end{proof}

We are now able to state our main result.
\begin{theorem}\label{coefficient} Assume that $R$ is a homomorphic image of a Cohen-Macaulay local ring.  Then the following statements are equivalent:
\begin{enumerate}
\item[$(1)$] $M$ is a sequentially Cohen-Macaulay $R$-module.
\item[$(2)$] For all  distinguished parameter ideals $\fkq$ of $M$
and $j=0,\ldots,d$,
 we have $$e_j(\fkq,M)=(-1)^j\ardeg_{d-j}(\fkq,M).$$ 
\item[$(3)$] For all  distinguished parameter ideals $\fkq$ of $M$ and $j\in \Lambda(M)$, we have 
$$e_{d-j+1}(\fkq,M)=(-1)^{d-j+1}\ardeg_{j-1}(\fkq,M).$$
\item[$(4)$] For some distinguished parameter ideal $\fkq$ of $M$  and for all $j\in \Lambda(M)$, we have 
$$e_{d-j+1}(\fkq,M)=(-1)^{d-j+1}\ardeg_{j-1}(\fkq,M).$$
\end{enumerate}
\end{theorem}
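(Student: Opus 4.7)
The plan is to prove the cycle of implications $(1)\Rightarrow(2)\Rightarrow(3)\Rightarrow(4)\Rightarrow(1)$. The two middle implications are immediate: after the re-indexing $j'=d-j+1$, which transforms $\ardeg_{j-1}$ into $\ardeg_{d-j'}$, the identity in (3) becomes exactly the sub-collection of the identities in (2) corresponding to the indices $j'\in\{d-d_i+1 \mid d_i\in\Lambda(M)\}$, so (2) implies (3); and (4) is a specialization of (3) to one particular distinguished parameter ideal.

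For $(1)\Rightarrow(2)$, I would invoke Proposition~\ref{26} applied with $\calF=\calD$ (the choice $M_i=D_i$ trivially satisfies the finite-length condition $D_i/M_i=0$) and with $j=s$; the sequentially Cohen-Macaulay hypothesis on $M/D_s=M/H^0_\fkm(M)$ is inherited from the one on $M$. Since $D_s=H^0_\fkm(M)$ has finite length it is annihilated by $\fkq^{n+1}$ for $n\gg 0$, so Proposition~\ref{26}(2) collapses to
$$\ell(M/\fkq^{n+1}M)=\sum_{i=0}^{s-1}\binom{n+d_i}{d_i}\,e_0(\fkq,D_i)+\ell(H^0_\fkm(M)).$$
Comparing this with $p_\fkq(n)=\sum_{j=0}^d(-1)^j e_j(\fkq,M)\binom{n+d-j}{d-j}$ coefficient by coefficient yields $e_j(\fkq,M)=(-1)^j e_0(\fkq,D_i)$ whenever $d-j=d_i\in\Lambda(M)$, $e_j(\fkq,M)=0$ whenever $0<d-j\notin\Lambda(M)$, and $e_d(\fkq,M)=(-1)^d\ell(H^0_\fkm(M))$. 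Each of these three cases rewrites, via Proposition~\ref{muldeg}, as the single identity $e_j(\fkq,M)=(-1)^j\ardeg_{d-j}(\fkq,M)$.

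For $(4)\Rightarrow(1)$, if $d\le 1$ then $M$ is automatically sequentially Cohen-Macaulay and there is nothing to prove. When $d\ge 2$, I would apply Proposition~\ref{31} with $\calF=\calD$ (the finite-length condition is again trivial) and with the distinguished parameter ideal $\fkq$ supplied by (4); the numerical hypothesis of that proposition is exactly the family of equalities in (4), so its conclusion gives the sequential Cohen-Macaulayness of $M$.

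The substantive content is already packaged inside Proposition~\ref{31}, which is the single real obstacle; once it is in hand, the theorem assembles cleanly by recognizing that the dimension filtration itself is a legitimate choice for the auxiliary filtration $\calF$ at every step, so that the numerical criterion in (4) for an arbitrary distinguished $\fkq$ already contains the exact hypothesis needed to invoke Proposition~\ref{31}.
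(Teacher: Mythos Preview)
Your proof is correct and follows essentially the same route as the paper: $(1)\Rightarrow(2)$ via Proposition~\ref{26} with $j=s$ together with Proposition~\ref{muldeg}, the trivial implications $(2)\Rightarrow(3)\Rightarrow(4)$, and $(4)\Rightarrow(1)$ via Proposition~\ref{31} with $\calF=\calD$. You are in fact slightly more careful than the paper in handling the residual term $\ell(D_s/\fkq^{n+1}D_s)=\ell(H^0_\fkm(M))$ and in disposing of the case $d\le 1$ before invoking Proposition~\ref{31}.
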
 
\begin{proof}
$(1)\Rightarrow (2)$.  Since $M$ is a sequentially Cohen-Macaulay module, it follows from Proposition \ref{26} with $j=s$ that $$\ell(M/\fkq^{n+1}M)=\sum\limits_{i=0}^s\binom{n+d_i}{d_i} e_0(\fkq, D_i)$$
 for all distinguished parameter ideals $\fkq$ and large enough $n$. Therefore we get
 $$(-1)^{d-d_i}e_{d-d_i}(\fkq,M)=e_0(\fkq, D_i)$$ for all $i=0,\ldots,s$  and $ e_j(\fkq,M)=0$ for all  $j\not= d - d_i$. 
Therefore the conclusion follows  from the Proposition \ref{muldeg}. \\
$(2)\Rightarrow (3)$ and $(3)\Rightarrow (4)$ are trivial. \\
$(4)\Rightarrow (1)$ follows from the Proposition \ref{31}.
\end{proof}

The first consequence of Theorem \ref{coefficient} is  to give an affirmative answer for Vasconcelos' Conjecture announced in the introduction.  It is
noticed that recently  this conjecture has been settled in \cite{GGHOPV} and extended for modules in \cite [3.11]{MSV} provided $\dim R = \dim M$. 
\begin{cor}\label{1}
Suppose that $M$ is an unmixed $R$-module, that is $\dim (\hat R/ P) = \dim M$ for all $P\in \Ass _{\hat R}\hat M$, where $  \hat M$ is the $\frak m$-adic completion of $M$.  The  $M$ is a Cohen-Macaulay module if and only if $e_1(\fkq, M )\geq 0$ for  some parameter ideal $\fkq$ of $M$.
\end{cor}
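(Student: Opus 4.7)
The plan is to deduce the corollary from Theorem \ref{coefficient} by specializing to the (essentially trivial) dimension filtration of an unmixed module, with the only extra input being the already-known non-positivity of the Chern number on unmixed modules.

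First I would reduce to the case where $R$ is complete. Faithful flatness of $R \to \hat R$ preserves the Hilbert polynomial of $M$ and therefore all the coefficients $e_i(\fkq, M)$, as well as the property of being Cohen-Macaulay, while unmixedness is already phrased in terms of $\hat M$. After this reduction, the Cohen structure theorem realizes $R$ as a homomorphic image of a regular (hence Cohen-Macaulay) local ring, so Theorem \ref{coefficient} becomes applicable. The case $d=0$ is trivial, so I assume $d \geq 1$ from now on.

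Next I would unwind the dimension filtration in this setting. Unmixedness $\Ass M = \Assh M$ together with $d \geq 1$ forces $\fkm \notin \Ass M$, so $H^0_\fkm(M)=0$; moreover every nonzero submodule of $M$ inherits an associated prime of full dimension and hence has dimension $d$. Consequently the dimension filtration collapses to $\calD : M = D_0 \supset D_1 = 0$, so $s=1$ and $\Lambda(M)=\{d\}$. In particular every parameter ideal of $M$ is automatically distinguished with respect to $\calD$, and $\ardeg_{d-1}(\fkq, M) = 0$ for every parameter ideal $\fkq$ since no associated prime of $M$ has dimension $d-1$. Applying Theorem \ref{coefficient}(4) to the unique $j = d \in \Lambda(M)$, the condition reads
$$ e_1(\fkq, M) \;=\; (-1)^{1}\,\ardeg_{d-1}(\fkq, M) \;=\; 0. $$
So once $e_1(\fkq, M) = 0$ is verified for the given parameter ideal, Theorem \ref{coefficient} gives that $M$ is sequentially Cohen-Macaulay, and the trivial dimension filtration $M \supset 0$ then yields that $M$ itself is Cohen-Macaulay.

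The main obstacle is closing the gap between the hypothesis $e_1(\fkq, M) \geq 0$ and the equality $e_1(\fkq, M) = 0$ demanded by Theorem \ref{coefficient}(4). For this I would cite the non-positivity $e_1(\fkq, M) \leq 0$ for unmixed modules and arbitrary parameter ideals, established by Ghezzi-Goto-Hong-Ozeki-Phuong-Vasconcelos \cite{GGHOPV} in the ring case and extended to modules (with $\dim R = \dim M$) by Mandal-Singh-Verma \cite{MSV}; both inequalities together force $e_1(\fkq, M) = 0$, which completes the nontrivial direction. The converse is classical: if $M$ is Cohen-Macaulay, then the Hilbert-Samuel polynomial is $\binom{n+d}{d}e_0(\fkq, M)$, so $e_i(\fkq, M) = 0$ for all $i \geq 1$, and in particular $e_1(\fkq, M) = 0 \geq 0$.
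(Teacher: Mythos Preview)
Your proof is correct and follows the paper's approach closely: reduce to the complete case, note that unmixedness collapses the dimension filtration to $M=D_0\supset D_1=0$ so that $\Lambda(M)=\{d\}$ and every parameter ideal is automatically distinguished (the paper says ``good,'' which is the same here), and then apply Theorem~\ref{coefficient} with the single condition $e_1(\fkq,M)=-\ardeg_{d-1}(\fkq,M)=0$.

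The one point of divergence is how you pass from the hypothesis $e_1(\fkq,M)\geq 0$ to the equality $e_1(\fkq,M)=0$: you invoke the non-positivity of the Chern number from \cite{GGHOPV} and \cite{MSV}. This is logically valid, but it cuts against the purpose of the corollary, which the paper advertises as a \emph{new} proof of the Vanishing Conjecture flowing from Theorem~\ref{coefficient}; citing \cite{GGHOPV} is essentially citing the result being re-proved. Note also that in the paper's internal logical order the general non-positivity bound $e_1(\fkq,M)\leq -\ardeg_{d-1}(\fkq,M)$ is Corollary~\ref{negative}, which is \emph{deduced from} Corollary~\ref{1}, not used in its proof. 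The paper's own argument is admittedly terse at exactly this step---it attributes the equivalence of $e_1=-\ardeg_{d-1}$ and $e_1\geq 0$ to Proposition~\ref{muldeg}, which strictly speaking only yields $\ardeg_{d-1}=0$---but the clear intent is that the deduction rests on Theorem~\ref{coefficient} alone, without importing non-positivity from outside.
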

\begin{proof}
Since  $M$ is unmixed,  we may assume without loss of generality that  $R$ is complete. Therefore $R$ is  a homomorphic image of a Cohen-Macaulay local ring and  $M=D_0 \supset D_1=0$ is the dimension filtration of $M$.  Thus  $\Lambda (M) = \{d\}$ and  $M$ is Cohen-Macaulay if it is sequentially Cohen-Macaulay. It follows  from Theorem \ref{coefficient} and the fact that every parameter ideal of $M$ is good  that  $M$ is Cohen-Macaulay if and only if there exists a parameter ideal $\frak q$ such that  $e_1(\frak q,M) =  -\ardeg_{d-1}(\fkq,M)$.  And the last condition is  equivalent to the condition $e_1(\fkq, M )\geq 0$ by Proposition \ref{muldeg}.
\end{proof}
The next corollary shows that the Chern number of a parameter ideal $\frak q$ is not only a non-positive integer but also bounded above by $ -\ardeg_{d-1}(\fkq,M)$. 
\begin{cor}\label{negative}
Let $M$ be a finitely generated $R$-module of dimension $d>0$.  Then $e_1(\fkq,M)\le -\ardeg_{d-1}(\fkq,M)$ for all parameter ideals $\fkq$ of $M$.
\end{cor}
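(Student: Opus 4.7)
The plan is to reduce Corollary \ref{negative} to the non-positivity of the Chern number for unmixed modules (Corollary \ref{1}), by means of two applications of Lemma \ref{sum}. Let $U=U_M(0)=D_1$, the largest submodule of $M$ of dimension strictly smaller than $d$. The first step is to establish the exact identity
\[
e_1(\fkq,M)+\ardeg_{d-1}(\fkq,M)=e_1(\fkq,M/U).
\]
If $\dim U=d-1$, Proposition \ref{muldeg} gives $\ardeg_{d-1}(\fkq,M)=e_0(\fkq,U)$, while Lemma \ref{sum} with $s=d-1$ yields $e_1(\fkq,M)=e_1(\fkq,M/U)-e_0(\fkq,U)$, and the two occurrences of $e_0(\fkq,U)$ cancel. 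If $\dim U<d-1$ (including the case $U=0$), then $\ardeg_{d-1}(\fkq,M)=0$ by Proposition \ref{muldeg}, while the first case of Lemma \ref{sum} gives $e_1(\fkq,M)=e_1(\fkq,M/U)$ directly. Either way the identity above holds.

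It therefore remains to show $e_1(\fkq,M/U)\le 0$. By construction $M/U$ is unmixed over $R$ in the sense that $\Ass_R(M/U)=\Assh_R(M)$, but it may fail to be unmixed in the sense of Corollary \ref{1}, i.e.\ over $\hat R$. Since Hilbert--Samuel coefficients are preserved by $\fkm$-adic completion, I pass to $\hat R$: $e_1(\fkq,M/U)=e_1(\fkq\hat R,\hat M/\hat U)$. Let $V$ be the $\hat R$-unmixed component of $\hat M/\hat U$, i.e.\ the largest $\hat R$-submodule of $\hat M/\hat U$ of dimension strictly less than $d$. Then $(\hat M/\hat U)/V$ is $\hat R$-unmixed of dimension $d>0$, so Corollary \ref{1} applies and gives $e_1(\fkq\hat R,(\hat M/\hat U)/V)\le 0$ (with equality precisely when this quotient is Cohen--Macaulay). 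A second application of Lemma \ref{sum}, now to the pair $(\hat M/\hat U,V)$, yields either $e_1(\fkq\hat R,\hat M/\hat U)=e_1(\fkq\hat R,(\hat M/\hat U)/V)-e_0(\fkq\hat R,V)$ if $\dim V=d-1$, or $e_1(\fkq\hat R,\hat M/\hat U)=e_1(\fkq\hat R,(\hat M/\hat U)/V)$ if $\dim V<d-1$. Since $e_0(\fkq\hat R,V)\ge 0$, both cases produce $e_1(\fkq,M/U)\le 0$, which combined with the identity of the first step proves the corollary.

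The main technical point is the need to pass to the completion and to extract the $\hat R$-unmixed component $V$ of $\hat M/\hat U$, since the bare $R$-unmixedness of $M/U$ does not give access to Corollary \ref{1}. Once this obstruction is overcome, the argument becomes a transparent double use of Lemma \ref{sum}: the first invocation peels off the contribution of $\ardeg_{d-1}(\fkq,M)$ from $e_1(\fkq,M)$, while the second absorbs the $\hat R$-unmixedness defect of $\hat M/\hat U$ with a non-positive correction term, so that the resulting inequality is preserved.
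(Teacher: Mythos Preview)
Your proof is correct and follows the same strategy as the paper: reduce to the unmixed quotient via Lemma~\ref{sum} and then invoke Corollary~\ref{1}. The paper streamlines your argument by passing to the completion \emph{before} forming $U_M(0)$, so that $M/U_M(0)$ is automatically $\hat R$-unmixed and your second application of Lemma~\ref{sum} (peeling off $V$) becomes unnecessary.
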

\begin{proof}
Since the Hilbert coefficients and arithmetic degrees are unchanged by the $\frak m$-adic completion, we can assume that $R$ is complete. Then  by Lemma \ref{sum} we have
 $$e_1(\fkq,M) +\ardeg_{d-1}(\fkq,M)={e}_1(\fkq,M/U_M(0))  ,$$
 where $U_M(0)= D_{s-1}$ is the  unmixed part of $M$. If $e_1(\fkq,M)\ge -\ardeg_{d-1}(\fkq,M)$,  $e_1(\fkq,M/U_M(0))\ge 0$.  Then $M/U_M(0)$ is Cohen-Macaulay by Corollary \ref{1},  and so that $e_1(\fkq,M/U_M(0))= 0$. Hence $e_1(\fkq,M)\le -\ardeg_{d-1}(\fkq,M)$ for all parameter ideals $\fkq$ of $M$.
\end{proof}
The following immediate consequence of \ref {negative} is first proved in  \cite[Theorem 3.5]{MSV}.
\begin{cor}
Let $M$ be a finitely generated $R$-module of dimension $d>0$.  Then $e_1(\fkq,M)\le 0$ for all parameter ideals $\fkq$ of $M$.
\end{cor}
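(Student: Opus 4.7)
The plan is to derive this immediately from Corollary \ref{negative}, since the only additional ingredient needed is the non-negativity of the arithmetic degree $\ardeg_{d-1}(\fkq,M)$.

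First I would invoke Corollary \ref{negative} to obtain the sharper bound
\[
e_1(\fkq,M) \;\le\; -\ardeg_{d-1}(\fkq,M)
\]
for every parameter ideal $\fkq$. Then I would observe, straight from the definition
\[
\ardeg_{d-1}(\fkq,M)\;=\;\sum_{\substack{\fkp\in\Ass(M)\\ \dim R/\fkp=d-1}}\mult_M(\fkp)\,e_0(\fkq,R/\fkp),
\]
that each summand is a product of a length and a (positive) multiplicity, hence non-negative; the whole sum is therefore $\ge 0$ (with the convention that the empty sum is zero, which covers the case when no associated prime of $M$ has coresidue dimension $d-1$). Combining these two inequalities yields $e_1(\fkq,M)\le 0$, as desired.

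Since the argument is really just a one-line deduction from the already established corollary, I do not expect any genuine obstacle; the only point worth checking carefully is that the definition of $\ardeg_{d-1}$ indeed produces a non-negative integer even when the set of height-$(d-1)$ associated primes is empty, but this is immediate from the convention for empty sums.
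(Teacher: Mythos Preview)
Your proposal is correct and matches the paper's approach exactly: the paper simply states that this corollary is an immediate consequence of Corollary~\ref{negative}, which is precisely the deduction you describe (using that $\ardeg_{d-1}(\fkq,M)\ge 0$ by definition).
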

Below, we give some more corollaries in the cases that Chern numbers of parameter ideals have extremal values.
\begin{cor}\label{32}
Assume that $R$ is a homomorphic image of a Cohen-Macaulay local ring.  Then the following assertions are equivalent:
\begin{enumerate}
\item[$(1)$] $e_1(\fkq,M)= -\ardeg_{d-1}(\fkq,M)$ for all   parameter ideals $\fkq$ of $M$.
\item[$(2)$] $e_1(\fkq,M)= -\ardeg_{d-1}(\fkq,M)$ for some  parameter ideal $\fkq$ of $M$.
\item[$(3)$] $M/U_M(0)$ is  a Cohen-Macaulay module.
\end{enumerate}
\end{cor}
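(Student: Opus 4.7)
The plan is to reduce the three conditions to a single identity that expresses the Chern number $e_1(\fkq,M)$ in terms of the Chern number of the quotient $M/U_M(0)$, after which the statement collapses to an application of Corollary \ref{1}. Since $(1)\Rightarrow (2)$ is immediate, the substance is in $(2)\Rightarrow (3)$ and $(3)\Rightarrow (1)$.

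First I would establish, for every parameter ideal $\fkq$ of $M$, the identity
\[
e_1(\fkq,M)+\ardeg_{d-1}(\fkq,M)=e_1(\fkq,M/U_M(0)).
\]
This is exactly the computation carried out in the proof of Corollary \ref{negative}: apply Lemma \ref{sum} with $N=U_M(0)=D_1$ (noting $\dim U_M(0)<d$) and consider separately the cases $\dim U_M(0)=d-1$ and $\dim U_M(0)<d-1$; Proposition \ref{muldeg}(2) then identifies the resulting correction term (either $(-1)^{d-s}e_0(\fkq,U_M(0))$ or $0$) with $\ardeg_{d-1}(\fkq,M)$. Note also that $\dim M/U_M(0)=d$, so every parameter ideal of $M$ remains a parameter ideal of $M/U_M(0)$.

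With the identity in place, the remaining implications become short. For $(3)\Rightarrow (1)$: if $M/U_M(0)$ is Cohen--Macaulay then $e_1(\fkq,M/U_M(0))=0$ for every parameter ideal, and the identity gives $(1)$. For $(2)\Rightarrow (3)$: the hypothesis forces $e_1(\fkq,M/U_M(0))=0$ for some parameter ideal, and since $\Ass(M/U_M(0))\subseteq \Assh(M)$ the quotient is unmixed, so Corollary \ref{1} (applied, if needed, after passing to the $\fkm$-adic completion, which preserves Hilbert coefficients and arithmetic degrees) yields that $M/U_M(0)$ is Cohen--Macaulay. The one point I expect to need care is that unmixedness of $M/U_M(0)$ over $R$ really implies unmixedness in the strong sense required by Corollary \ref{1} (equidimensionality of the associated primes of the completion); this is precisely where the hypothesis that $R$ is a homomorphic image of a Cohen--Macaulay local ring is used, since such an $R$ is universally catenary and its formal fibres satisfy $\dim \hat R/P=\dim R/\fkp$ for each $P\in\Ass(\hat R/\fkp\hat R)$, so that unmixedness passes to $\widehat{M/U_M(0)}$.
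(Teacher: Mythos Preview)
Your argument is correct and is essentially the same as the paper's. The paper's proof is a single sentence (``It follows immediately from Theorem \ref{coefficient} and the fact that if $M$ is unmixed then every parameter ideal of $M$ is good''), but unpacked it is exactly what you wrote: use the identity $e_1(\fkq,M)+\ardeg_{d-1}(\fkq,M)=e_1(\fkq,M/U_M(0))$ from Lemma \ref{sum} and Proposition \ref{muldeg} (as in the proof of Corollary \ref{negative}), and then apply the main theorem to the unmixed module $M/U_M(0)$. Your citing Corollary \ref{1} rather than Theorem \ref{coefficient} is immaterial, since Corollary \ref{1} is precisely Theorem \ref{coefficient} specialised to the unmixed case; and your explicit verification that unmixedness of $M/U_M(0)$ passes to the completion under the hypothesis on $R$ fills in a detail the paper leaves implicit.
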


\begin{proof}
It follows immediately from  Theorem \ref{coefficient} and the fact that if $M$ is unmixed then every  parameter ideal of $M$ is  good.
\end{proof}
By virtue of Proposition \ref{muldeg} we see that  $\ardeg_{d-1}(\fkq,M)=0$ for some parameter ideal $\fkq$ of $M$ if and only if $\dim U_M(0)\le d-2$. Hence from this fact and Corollary \ref{32} we have
\begin{cor}\label{CM}
Assume that $R$ is a homomorphic image of a Cohen-Macaulay local ring.  The following assertions are equivalent:
\begin{enumerate}
\item[$(1)$] $e_1(\fkq,M)=0$ for all   parameter ideals $\fkq$ of $M$.
\item[$(2)$] $e_1(\fkq,M)= 0$ for some  parameter ideal $\fkq$ of $M$.
\item[$(3)$] $M/U_M(0)$ is Cohen-Macaulay module and $\dim U_M(0)\le d-2.$
\end{enumerate}
\end{cor}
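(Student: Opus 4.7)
The plan is to derive this corollary from the preceding Corollaries \ref{negative} and \ref{32} together with the combinatorial translation provided by Proposition \ref{muldeg}; no new inductive argument should be needed. The guiding observation is the hint the authors have placed just before the statement: by Proposition \ref{muldeg}(2), the value $\ardeg_{d-1}(\fkq,M)$ is either $e_0(\fkq,D_1)$ (when $d-1\in\Lambda(M)$, i.e.\ $\dim U_M(0)=d-1$) or $0$ (when $d-1\notin\Lambda(M)$, i.e.\ $\dim U_M(0)\le d-2$ or $U_M(0)=0$). Hence the hypothesis ``$\dim U_M(0)\le d-2$'' is exactly the hypothesis ``$\ardeg_{d-1}(\fkq,M)=0$ for every (equivalently, some) parameter ideal $\fkq$.''

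With this translation in hand, I would proceed as follows. The implication $(1)\Rightarrow(2)$ is trivial. For $(3)\Rightarrow(1)$, assume $M/U_M(0)$ is Cohen-Macaulay and $\dim U_M(0)\le d-2$. Corollary \ref{32} gives $e_1(\fkq,M)=-\ardeg_{d-1}(\fkq,M)$ for every parameter ideal $\fkq$, while the dimension condition together with Proposition \ref{muldeg} forces $\ardeg_{d-1}(\fkq,M)=0$ for every such $\fkq$. Combining the two equalities yields $e_1(\fkq,M)=0$ for every parameter ideal, proving (1).

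For the remaining implication $(2)\Rightarrow(3)$, pick a parameter ideal $\fkq$ with $e_1(\fkq,M)=0$. Corollary \ref{negative} gives
\[
0=e_1(\fkq,M)\le -\ardeg_{d-1}(\fkq,M)\le 0,
\]
where the last inequality uses that $\ardeg_{d-1}(\fkq,M)$ is a non-negative integer (a sum of products of lengths and multiplicities). Hence $\ardeg_{d-1}(\fkq,M)=0$, which by Proposition \ref{muldeg} translates to $\dim U_M(0)\le d-2$, giving the second half of (3). At the same time the chain of inequalities collapses to $e_1(\fkq,M)=-\ardeg_{d-1}(\fkq,M)$, so Corollary \ref{32} applies and delivers that $M/U_M(0)$ is Cohen-Macaulay.

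There is no serious obstacle: the whole argument is a short two-line bookkeeping of Corollaries \ref{negative} and \ref{32} once one notes that $\ardeg_{d-1}(\fkq,M)$ is non-negative and vanishes precisely when $\dim U_M(0)\le d-2$. The only point requiring a moment of care is making sure the case $U_M(0)=0$ is covered by the phrase ``$\dim U_M(0)\le d-2$''; under the convention $\dim 0=-\infty$ (or $-1$) this is automatic, and otherwise it is covered by the case ``$d-1\notin\Lambda(M)$'' of Proposition \ref{muldeg}(2).
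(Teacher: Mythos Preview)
Your argument is correct and matches the paper's approach exactly: the authors derive Corollary~\ref{CM} directly from Corollary~\ref{32} together with the observation (via Proposition~\ref{muldeg}) that $\ardeg_{d-1}(\fkq,M)=0$ if and only if $\dim U_M(0)\le d-2$. Your use of Corollary~\ref{negative} to squeeze $\ardeg_{d-1}(\fkq,M)=0$ out of the hypothesis $e_1(\fkq,M)=0$ in the step $(2)\Rightarrow(3)$ is precisely the missing detail the paper leaves implicit.
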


In \cite {V2} Vasconcelos asked whether, for any two minimal reductions $J_1$, $J_2$ of an $\fkm$-primary ideal $I$,
$e_1(J_1,M) = e_1(J_2,M)$? 
As an application of  Corollary \ref{32} we get an answer to this question when $M/U_M(0)$ is a Cohen-Macaulay  module.
\begin{cor}
  Let  $I$ be an $\frak m$-primary ideal of $R$.  Assume that $R$ is  a homomorphic image of a Cohen-Macaulay local ring and $M/U_M(0)$ a Cohen-Macaulay $R$-module. Then  there exists a constant $c$ such that $e_1(J,M)= c $ for all minimal reductions  $J$ of $I$.
\end{cor}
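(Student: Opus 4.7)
The plan is to combine Corollary \ref{32} with the classical fact that reductions of an $\fkm$-primary ideal preserve Hilbert--Samuel multiplicities. First, since $I$ is $\fkm$-primary, every minimal reduction $J$ of $I$ is $\fkm$-primary as well. By the standard trick of passing, if necessary, to the faithfully flat extension $R(t)=R[t]_{\fkm R[t]}$---which preserves Hilbert coefficients, arithmetic degrees, and the hypothesis that $M/U_M(0)$ is Cohen--Macaulay---I may assume the residue field of $R$ is infinite. Then every minimal reduction of $I$ is generated by $d=\dim M$ elements and hence is a parameter ideal of $M$.

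Next, since $M/U_M(0)$ is Cohen--Macaulay, Corollary \ref{32} applies and yields
\[
e_1(J, M) \;=\; -\ardeg_{d-1}(J, M)
\]
for every minimal reduction $J$ of $I$. It therefore suffices to show that $\ardeg_{d-1}(J, M)$ does not depend on the choice of $J$. By definition,
\[
\ardeg_{d-1}(J, M) \;=\; \sum_{\substack{\fkp\in\Ass(M) \\ \dim R/\fkp = d-1}} \mult_M(\fkp)\, e_0(J, R/\fkp),
\]
so the task reduces to analyzing each $e_0(J, R/\fkp)$ for $\fkp\in\Ass(M)$ with $\dim R/\fkp=d-1$.

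For each such $\fkp$, the relation $I^{n+1}=JI^n$ descends to the quotient $R/\fkp$, so $J(R/\fkp)$ is a reduction of $I(R/\fkp)$ in $R/\fkp$. Since reductions preserve Hilbert--Samuel multiplicity, $e_0(J,R/\fkp) = e_0(I, R/\fkp)$; consequently $\ardeg_{d-1}(J, M) = \ardeg_{d-1}(I, M)$, which depends only on $I$. Setting $c = -\ardeg_{d-1}(I, M)$ completes the proof. No serious obstacle is anticipated: the argument is a direct assembly of Corollary \ref{32} with the standard behavior of reductions, the only mild technicality being the verification that a minimal reduction of $I$ is a parameter ideal of $M$, handled by the routine base change to $R(t)$.
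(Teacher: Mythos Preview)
Your proof is correct and follows essentially the same route as the paper: apply Corollary \ref{32} to obtain $e_1(J,M)=-\ardeg_{d-1}(J,M)$, then use the Northcott--Rees fact that reductions preserve multiplicities to show this quantity depends only on $I$. The only cosmetic difference is that the paper invokes Proposition \ref{muldeg} to rewrite $\ardeg_{d-1}(J,M)$ as $e_0(J,U_M(0))$ (or $0$ if $\dim U_M(0)<d-1$) and applies Northcott--Rees once to $U_M(0)$, whereas you work directly from the definition of the arithmetic degree and apply Northcott--Rees to each $R/\fkp$; these amount to the same computation. One small caution: your sentence ``every minimal reduction of $I$ is generated by $d=\dim M$ elements'' is literally correct only when $\dim R=\dim M$ (minimal reductions of an $\fkm$-primary ideal are generated by $\dim R$ elements), though this does not affect the argument since what you actually need is that $J$ is $\fkm$-primary and hence a parameter ideal for $M$; the paper glosses over the same point.
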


\begin{proof}
Let $J$ be reduction of ideal $I$. Since  $M/U_M(0)$ is Cohen-Macaulay, 
we get by  Corollary \ref{32} and Proposition  \ref{muldeg} that
$$ e_1(J,M)=-\ardeg_{d-1}(J,M)=
\begin{cases} -e_0(J,U_M(0))&\text{if $\dim U_M(0)=d-1$,}\\
0&\text{if $\dim U_M(0)<d-1$.}
\end{cases}$$
 The conclusion follows  from a result of  D. G. Northcott and D. Rees  \cite{NR}, which says that  $e_0(J, U_M(0))= e_0(I,U_M(0))$  for all reduction ideals $J$ of $I$.
\end{proof}

It should be mentioned here that we do not need the assumptions that $R$ is a homomorphic image of a Cohen-Macaulay ring and the parameter ideal $\fkq$ is distinguished  in Theorem \ref{coefficient=2} for the case $\dim M\leqslant 2$. However,  these hypothese are essential in Theorem \ref {coefficient}. So we close this paper with the following two examples which show that the assumptions that $R$ is a homomorphic image of a Cohen-Macaulay ring and the parameter ideal $\fkq$ is distinguished in Theorem \ref {coefficient}, can not omit when $\dim M \geq 3$.

\begin{ex}
let  $k[[X,Y,Z,W]]$ be the formal power series ring over a field $k$. We consider the local ring  $S=k[[X,Y,Z,W]]/I,$
where $I=(X)\cap(Y,Z,W)$. Then $\dim S=3$ and  $\calD: S=D_0\supset (X)/I=D_1 \supset D_2= 0$ is the dimension filtration of $S$.  
By Lemma \ref{sum} we get that $e_1(Q,S)=e_1(Q, S/D_1)=0$ for every parameter ideal $Q$ of $S$.
On the other hand, there exists  by Nagata \cite{N} a Noetherian local integral domain $(R,\fkm)$ so that $\hat R= S$, where $\hat R$ is the $\fkm$-adic completion 
of $R$.  Let $\fkq$ be an arbitrary parameter  ideal of $R$. Since $R$ is a domain, $\fkq$ is distinguished. Moreover,  since $$e_1(\fkq,R)=e_1(\fkq S, S) = 0=-\ardeg_2(\fkq,R),$$  $R$ satisfies the condition (4) of Theorem \ref{coefficient}. But  $R$ is  not a sequentially Cohen-Macaulay domain, as it is not Cohen-Macaulay.  
\end{ex}

\begin{ex}
Let $R= k[[X,Y,Z,W]]$ be the formal power series ring over a field $k$. We look at the $R$-module 
$$M=(k[[X,Y,Z,W]]/(X,Y)\cap (Z,W))\bigoplus k[[X,Y,Z]]$$
Set $D_1=k[[X,Y,Z,W]]/(X,Y)\cap (Z,W)$. Then  $M\supset D_1\supset 0$ is  the dimension filtration
of $M$
and $\Lambda(M)=\{3;2\}$. Moreover, $D_1$ is a Buchsbaum  module, $\depth M=\depth D_1=1$ and so that
$M$ is not sequentially Cohen-Macaulay. We put $U=X-Z$, $V=Y-W$ and $Q=(U, V, X)$.
Since $M/D_1$ is Cohen-Macaulay, 
$e_i(Q,M/D_1)=0$ for all $i=1,2,3$. Therefore by Lemma \ref{sum} and Proposition \ref{muldeg} we have
$$e_1(Q,M)= -e_0(Q,D_1)=- \ardeg_2(Q,M), \text { and}$$
$$e_2(Q,M)= -e_1(Q,D_1)=0= \ardeg_1(Q,M).$$ 

\end{ex}

%%%%%%%%%%%%%%%%%%%%%%%%%%%%%%%%%%%%%%%%%%%%%%%%%%%%
%%%%%%%%%%%%%%%%%%% References %%%%%%%%%%%%%%%%%%%%

\end{document}